\newcommand{\monthyear}[1]{%
  \def\@monthyear{\uppercase{#1}}}
\newcommand{\volnumber}[1]{%
  \def\@volnumber{\uppercase{#1}}}
\def\ps@plain{\ps@empty
  \def\@oddfoot{\@monthyear \hfil \thepage}%
  \def\@evenfoot{\thepage \hfil \@volnumber}}
\def\ps@firstpage{\ps@plain}
\def\ps@headings{\ps@empty
  \def\@evenhead{%
    \setTrue{runhead}%
    \def\thanks{\protect\thanks@warning}%
    \uppercase{The Fibonacci Quarterly}\hfil}%
  \def\@oddhead{%
    \setTrue{runhead}%
    \def\thanks{\protect\thanks@warning}%
    \hfill\uppercase{Primes and composites in the Det Hosoya triangle}}%
  \let\@mkboth\markboth
  \def\@evenfoot{%
    \thepage \hfil \@volnumber}%
  \def\@oddfoot{%
    \@monthyear \hfil \thepage}%
  }%
\theoremstyle{plain}
\numberwithin{equation}{section}
\newtheorem{thm}{Theorem}[section]
\newtheorem{theorem}[thm]{Theorem}
\newtheorem{lemma}[thm]{Lemma}
\newtheorem{proposition}[thm]{Proposition}
\newtheorem{corollary}[thm]{Corollary}
\definecolor{webgreen}{rgb}{0,.5,0}
\definecolor{webbrown}{rgb}{.6,0,0}
\newcommand{\legendre}[2]{\genfrac{(}{)}{}{}{#1}{#2}}
\begin{document}
\monthyear{Month Year}
\volnumber{Volume, Number}
\setcounter{page}{1}

\title{Primes and composites in the determinant Hosoya triangle}
\author{Hsin-Yun Ching}
\address{Department of Mathematical Sciences\\
                The Citadel\\
                Charleston,  SC\\
                USA}
\email{george425tw@gmail.com}
\thanks{The second and fourth authors were partially supported by The Citadel Foundation. Several of the main results in this paper were found by 
the first and second authors while the first was working on his undergraduate research project under the guidance of the second and fourth authors (who followed the 
guidelines given  in \cite{FlorezMukherjeeED}).}
\author{Rigoberto Fl\'orez}
\address{Department of Mathematical Sciences\\
                The Citadel\\
                Charleston,  SC\\
                USA}
\email{rigo.florez@citadel.edu}
\author{F. Luca}
	\address{School of Maths, Wits University, South Africa, Research Group in Algebraic Structures and Applications, King Abdulaziz University, Saudi Arabia and Centro de Ciencias Matem\'aticas UNAM, Morelia, Mexico}
	\email{florian.luca@wits.ac.za}
\author{Antara Mukherjee}
\address{Department of Mathematical Sciences\\
                The Citadel\\
                Charleston,  SC\\
                USA}
\email{antara.mukherjee@citadel.edu}
\author{J.~C.~Saunders}
\address{Department of Mathematics and Statistics\\ 
               University of Calgary\\
               Calgary, AB T2N 1N4\\
                Canada.}
\email{john.saunders1@ucalgary.ca}

\begin{abstract}
In this paper, we look at numbers of the form $H_{r,k}:=F_{k-1}F_{r-k+2}+F_{k}F_{r-k}$. These numbers are the entries of a triangular array called the 
\emph{determinant Hosoya triangle} which we denote by ${\mathcal H}$. We discuss the divisibility properties of the above numbers and their primality. We give a 
small  sieve of primes to illustrate the density of prime numbers in ${\mathcal H}$. Since the Fibonacci and Lucas numbers appear as entries in ${\mathcal H}$, 
our research is an extension of the classical questions concerning whether there are infinitely many Fibonacci or Lucas primes. We prove that ${\mathcal H}$ has arbitrarily large neighbourhoods of composite entries.  
Finally we present an abundance of data indicating a very high density of primes in ${\mathcal H}$.

\end{abstract}

\maketitle
\section {Introduction}
 The \emph{determinant Hosoya triangle} is a triangular array denoted by ${\mathcal H}$  that has the same recurrence relation of the Hosoya triangle \cite{hosoya} but with different initial   
 conditions. The entries of this triangle can be expressed as determinants of Fibonacci numbers. Thus, the  entries of the determinant Hosoya triangle are given by 
 \begin{equation}
 \label{eq:maineq}
 H_{r,k}=\begin{vmatrix}
F_{k+1} & F_{k}\\
F_{r-k+1} & F_{r-k+2}\\
\end{vmatrix},
\end{equation}
where $r\ge 1$ represents the line and $k\ge 1$ represents the position in the line $r$ from left to right, see Table \ref{tabla2}. 
For example, the  entry $H_{7,5}$ of $\mathcal{H}$ is given by

\[H_{7,5}=\begin{vmatrix}
F_{6} & F_{5}\\
F_{3} & F_{4}\\
\end{vmatrix}=\begin{vmatrix}
8 & 5\\
2 & 3\\
\end{vmatrix}=24-10=14.\]

\begin{table} [!ht] \small
	\centering
	\addtolength{\tabcolsep}{-3pt} \scalebox{.85}{
		\begin{tabular}{ccccccccccccccccccccccccccc}
   &&&&&&&&&&&&&     	                                $0$                                                &&&&&&&&&&&&&\\
   &&&&&&&&&&&&     	                           $1$  &&   $1$                                            &&&&&&&&&&&&\\
   &&&&&&&&&&&     		           $1$  &&   $3$  &&   $1$                                        &&&&&&&&&&&\\
   &&&&&&&&&&     	                     $2$  &&   $4$  &&   $4$  &&   $2$                                    &&&&&&&&&&\\
   &&&&&&&&&     		        $3$  &&   $7$  &&   $5$  &&   $7$  &&   $3$                                &&&&&&&&&\\
   &&&&&&&&     	           $5$  &&   $11$ &&   $9$  &&   $9$  &&   $11$ &&   $5$                            &&&&&&&&\\
   &&&&&&&     	          $8$  &&   $18$ &&   $14$ &&   $16$ &&   $14$ &&   $18$ &&   $8$                        &&&&&&&\\
   &&&&&&     	     $13$ &&   $29$ &&   $23$ &&   $25$ &&   $25$ &&   $23$ &&   $29$ &&   $13$                   &&&&&&\\
   &&&&&        $21$ &&   $47$ &&   $37$ &&   $41$ &&   $39$ &&   $41$ &&   $37$ &&   $47$ &&   $21$              &&&&&\\
   &&&&    $34$ &&   $76$ &&   $60$ &&   $66$ &&   $64$ &&   $64$ &&   $66$ &&   $60$ &&   $76$ &&   $34$          &&&&\\
   &&&	$55$ &&   $123$&&   $97$ &&   $107$&&   $103$&&   $105$&&   $103$&&   $107$&&   $97$ &&   $123$&&  $55$ &&&     \\
  &&$89$ && $199$ && $157$ && $173$ && $167$ && $169$ && $169$ && $167$ && $173$ && $157$ && $199$ && $89$  &&  \\
  &  $144$ && $322$ && $254$ && $280$ && $270$ && $274$ && $272$ && $274$ && $270$ && $280$ && $254$ && $322$ && $144$&\\
  $233$ && $521$ && $411$ && $453$ && $437$ && $443$ && $441$ && $441$ && $443$ && $437$ && $453$ && $411$ && $521$ && $233$\\
\end{tabular}}
	\caption{The determinant Hosoya triangle  $\mathcal{H}$.} \label{tabla2}
\end{table}

Expanding the determinant in \eqref{eq:maineq}, we get 
\begin{eqnarray*}
H_{r,k} & = & F_{k+1}F_{r-k+2}-F_kF_{r-k+1}=(F_k+F_{k-1})F_{r-k+2}-F_kF_{r-k+1}\\
& = & F_{k-1}F_{r-k+2}+F_k(F_{r-k+2}-F_{r-k+1})=F_{k-1}F_{r-k+2}+F_kF_{r-k}.
\end{eqnarray*}
Further, the entries of ${\mathcal H}$ have the generating function
$$
\frac{(x+y+xy)}{(1-x-x^2)(1-y-y^2)}=\sum_{r\ge 1,k\ge 1} H_{r,k} x^ry^k
$$
originally discovered by Sloane \cite[A108038]{sloane}.

We now give a recursive definition of ${\mathcal H}$. Namely, ${\mathcal H}$ is the doubly indexed sequence $\{H_{r,k}\}_{\substack{r\ge 1\\ k\ge 1}}$ given by

\begin{equation}\label{Hosoya:Seq}
 H_{r,k}= H_{r-1,k}+H_{r-2,k}  \; \text{ and } \;
 H_{r,k}= H_{r-1,k-1}+H_{r-2,k-2}, \quad r\ge 3\quad {\text{\rm and}}\quad 1\le k\le r
 \end{equation}
with initial conditions $H_{1,1}=0, H_{2,1}= H_{2,2}=1,$ and $ H_{3,2}=3$. 
Note that the left hand-side of \eqref{Hosoya:Seq} gives of rise to slash diagonals and the right hand-side gives rise to backslash diagonals of the triangle.

It is easy to see that the entries (points) of every diagonal (slash or backslash) in ${\mathcal H}$ correspond to a 
generalized Fibonacci number, in the sense that the sequence of such entries obeys the Fibonacci recurrence in one of the two parameters when the other one is fixed. For instance, the sequence 
$$
3,~ 11,~ 14,~ 25,~ 39,~ 64,~ \dots .
$$
that corresponds to the 
fifth diagonal  in Table \ref{tabla2} obeys the Fibonacci recurrence  
$G_{n}^{(5)}=G_{n-1}^{(5)}+ G_{n-2}^{(5)}$, with $G_{1}^{(5)}=3$ and $G_{2}^{(5)}=11$. 
In general, the entries of the $m$th diagonal of this triangle are given by
\begin{equation}\label{HoyaAsGenFibo}
G_{n}^{(m)}=G_{n-1}^{(m)}+ G_{n-2}^{(m)}, \quad  \text{ where }  \quad  G_{1}^{(m)}= F_{m-1} \quad  \text{ and }  \quad   G_{2}^{(m)}=L_{m}.
\end{equation}
For example, for a fixed $m$ the slash diagonal in position $m$ in the triangle is given by  
$$
G_{n}^{(m)}=H_{n,m}= L_{n-m+1}F_{m-1}+F_{n-m}F_{m-2}.
$$
For a fixed $m$ the backslash diagonal in position $m$ in the triangle is given by  
$$
G_{n}^{(m)}=H_{n+m-1,m}= L_m F_{n-1}+F_{m-1} F_{n-2}.
$$
Other equivalent identities are 
\begin{equation}\label{definitionHrk1}
H_{r,k}=F_{k-1}F_{r-k+2}+F_{k}F_{r-k},
\end{equation}
and 
\begin{equation}\label{definitionHrk2}
H_{r,k}= F_{k+1} F_{r-k+2}- F_{k}F_{r-k+1}.
\end{equation}

Blair et al. \cite{BlairRigoAntara} give several combinatorial connections with the determinant Hosoya  triangle as well 
as combinatorial interpretation of its entries. More properties of this triangle and other similar triangles can be found in 
\cite{BlairRigoAntaraHoneycombs, BlairRigoAntaraGP, BlairRigoAntara, Blair, Ching, Czabarka, florezHiguitaMukherjee, florezHiguitaJunesGCD, florezjunes, florezHiguitaMukherjee2, FlorezMukherjeeED, hosoya, koshy2}. Recently, Benjamin et al. \cite{Benjamin} gave several elegant combinatorial proofs of identities from the Hosoya Triangle. It will be very interesting to see similar results for the determinant Hosoya triangle. 
 
Number theorists have been interested in Fibonacci primes for a very long time, at least starting with Lucas' seminal paper \cite{Luc} that investigated properties of Lucas sequences, which are natural generalizations of the Fibonacci sequence 
 with the aim of developing primality tests. Some results on Fibonacci primes can be found  
in \cite{jarden, brillhart}. In this paper we ask whether there are infinitely many primes of the form $H_{r,k}$. When $k=1$ and $k=2$ we get the classical questions 
concerning the existence of infinitely many primes in the Fibonacci and Lucas sequences, respectively. For example, from Table \ref{tabla2} we observe that:
\begin{itemize}
\item in row $3$ there is one prime number, $H_{3,2}=3$; 
\item in row $4$  there are two prime numbers $H_{4, 1}= 2$ and $H_{4, 4}= 2$; 
\item in row $5$ there are five prime numbers,   $H_{5, 1}= 3$,  $H_{5, 2}= 7$,  $H_{5, 3}= 5$, $H_{5, 4}= 7$, and  $H_{5, 5}= 3$; 
\item in row $6$ there are four primes
 $H_{6, 1}= 5$, $H_{6, 2}= 11$,  $H_{6, 5}= 11$, and $H_{6, 6}= 5$.  
 \end{itemize}
The distinct primes from Table \ref{tabla2} in the order that they appear are
$$3, ~2,~ 7,~ 5,~ 11,~ 13,~ 29,~ 23,~ 47,~ 37,~ 41,~ 97,~ 107,~ 103,~ 89, ~199,~ 157,~ 173,~ 167, ~233.
$$ 
From the same table we see that  
$$
H_{r+3,3}=F_{r}+L_{r+1}.
$$ 
There are several examples of primes that are the sum of a Fibonacci number and a Lucas number  
with consecutive subscripts such as:   
$$
5,~ 23,~ 37,~ 97,~ 157,~ 1741,~ 11933,~ 50549
$$ 
(for a longer list of such known primes see \cite[A091157]{sloane}). Are there infinitely many primes of this form? The answer to this question is not known. 

Searching for primes of the form $H_{r+3,3}=F_{r}+L_{r+1}$ we ran the computer algebra package \emph{Mathematica} until $r=80,000$, 
 found that there are $47$ primes of this form. We did not find any new prime for $r$ between $64,000$ and $80,000$. 
Running \emph{Mathematica} until $r=80,000$ for primes of the form $F_{r}+L_{r-1}$ (see \cite[A153892]{sloane}) we have found that there are $43$ primes of this form.  
We did not find any new prime for $r$ between $64,000$ and $80,000$. 
So, the formal question states as: Are there infinitely many primes of the form $F_{r}+L_{r\pm1}$? 
If $F_r+L_{r+1}$ and $F_r+L_{r-1}$  are primes for a fixed $r$, then they are called \emph{neighboring-Lucas-Fibonacci primes}. The two known neighboring-Lucas-Fibonacci primes are $2$, $5$ and $19$, $37$, is there any other pair of neighboring-Lucas-Fibonacci primes? Note that 13 is the common primitive root for these 4 primes. 

In this paper, we use divisibility properties of entries of ${\mathcal H}$ in order to give a short sieve for their primality. We also prove that there are arbitrarily 
large neighborhoods of ${\mathcal H}$ where all entries are composite.  This can be seen as a two-dimensional analogue of the well--known fact that there 
are arbitrarily large intervals of composite numbers, such as $[m!+2, m!+m]$ with an arbitrary integer $m\ge 3$.  We also give the data of all prime entries of 
${\mathcal H}$ found with $r\le 6000$.

\section{Some divisibility properties of numbers of the form $H_{r,k}$ }\label{congurencesHrk}

In this section, we give some divisibility properties of entries of ${\mathcal H}$. For the calculations in this section it is convenient to extend the Fibonacci numbers to negative indices via the recurrence 
$$
F_{-n}=F_{-n+2}-F_{-n+1}\quad {\text{\rm for}}\quad n\ge 0.
$$
It is well-known that $F_{-n}=(-1)^{n-1} F_n$ for all $n\ge 0$. 

\subsection{Congruences}
First of all we prove that two consecutive (on the first variable) numbers of the form  $H_{r,k}$ are relative prime. 

\begin{proposition} \label{RelativePrimes}
For $1\le k\le n$  we have $\gcd(H_{r,k}, H_{r-1,k})=1$ and $\gcd(H_{r,k}, H_{r-1,k-1})=1$.
\end{proposition}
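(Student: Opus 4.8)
The plan is to use the Fibonacci-bilinear form of the entries, \eqref{definitionHrk2}, together with Cassini's identity $F_{n-1}F_{n+1}-F_n^2=(-1)^n$. In both statements we are comparing two consecutive entries along a diagonal of $\mathcal{H}$, so I will show that any common divisor $d$ is forced to divide two Fibonacci numbers whose indices differ by at most $2$; such Fibonacci numbers are coprime, since $\gcd(F_m,F_{m+1})=1$ and $\gcd(F_m,F_{m+2})=\gcd(F_m,F_{m+1})=1$.

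For the first gcd I write, using \eqref{definitionHrk2},
\[
H_{r,k}=F_{k+1}F_{r-k+2}-F_{k}F_{r-k+1},\qquad H_{r-1,k}=F_{k+1}F_{r-k+1}-F_{k}F_{r-k}.
\]
The idea is to take the two $F$-linear combinations of these that eliminate one of $F_k,F_{k+1}$ and leave a Cassini expression in the $r-k$ index. Concretely, $F_{r-k+1}H_{r,k}-F_{r-k+2}H_{r-1,k}$ collapses (after cancellation and Cassini) to $\pm F_k$, while $F_{r-k}H_{r,k}-F_{r-k+1}H_{r-1,k}$ collapses to $\pm F_{k+1}$. Hence $d\mid F_k$ and $d\mid F_{k+1}$, so $d\mid\gcd(F_k,F_{k+1})=1$.

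The second gcd is handled symmetrically, but now the natural expression is \eqref{definitionHrk1}, namely $H_{r,k}=F_{k-1}F_{r-k+2}+F_{k}F_{r-k}$: fixing $r-k$ and letting $k$ vary, the entries $H_{r-1,k-1}=F_{k-2}F_{r-k+2}+F_{k-1}F_{r-k}$ and $H_{r,k}$ are consecutive terms of a generalized Fibonacci sequence in $k$ (compare \eqref{HoyaAsGenFibo}). Taking the two analogous combinations in which the multipliers are now Fibonacci numbers in the $k$ index, Cassini's identity in the $k$ index produces $\pm F_{r-k}$ and $\pm F_{r-k+2}$. Thus $d\mid\gcd(F_{r-k},F_{r-k+2})=1$, where the convention $F_0=0$, $F_{-1}=1$ covers the boundary cases.

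An alternative, more structural route is to invoke the fact that consecutive terms of an integer generalized Fibonacci sequence have a common divisor independent of the term (immediate from $G_{n-2}=G_n-G_{n-1}$), and then to reduce each gcd to the two base entries of the relevant diagonal, which by \eqref{HoyaAsGenFibo} are $H_{k,k}=F_{k-1}$ and $H_{k+1,k}=L_k$ for the slash diagonal, and $H_{r-k+1,1}=F_{r-k}$ and $H_{r-k+2,2}=L_{r-k+1}$ for the backslash diagonal; one then checks $\gcd(L_k,F_{k-1})=\gcd(F_{k+1},F_{k-1})=1$ and similarly for the backslash base. I expect the only real obstacle to be bookkeeping: choosing the correct pair of Fibonacci multipliers so that Cassini appears with the right index, tracking the signs $(-1)^{\bullet}$, and verifying the degenerate entries (such as $H_{k,k}=F_{k-1}$ and $H_{r-k+1,1}=F_{r-k}$) so that the boundary values $k=1$ and $r-k=0$ are genuinely covered.
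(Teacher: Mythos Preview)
Your argument is correct. Both linear--combination computations go through exactly as you describe: in the first case Cassini at the index $r-k+1$ yields $(-1)^{r-k+1}F_k$ and $(-1)^{r-k+1}F_{k+1}$ as integer combinations of $H_{r,k}$ and $H_{r-1,k}$, and in the second case Cassini at the index $k-1$ yields $(-1)^{k}F_{r-k+2}$ and $(-1)^{k-1}F_{r-k}$ as integer combinations of $H_{r,k}$ and $H_{r-1,k-1}$. Consecutive (or index-two-apart) Fibonacci numbers are coprime, so the conclusion follows; your handling of the boundary cases via $F_0=0$, $F_{-1}=1$ is fine.

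However, your route is genuinely different from the paper's. The paper does not unpack the Fibonacci bilinear form at all: it simply quotes, from \cite{BlairRigoAntaraGP}, the single B\'ezout-type identity
\[
(-1)^{r}H_{r,k+1}H_{r,k}+(-1)^{r+1}H_{r+1,k+1}H_{r-1,k}=1,
\]
which immediately gives $\gcd(H_{r,k},H_{r-1,k})=1$, and then appeals (implicitly, via the symmetry $H_{r,k}=H_{r,r+1-k}$ of the triangle) to deduce the second gcd. So the paper's proof is a one-line citation that keeps the coefficients inside $\mathcal{H}$, whereas you derive everything from scratch using Cassini, producing Fibonacci coefficients instead. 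Your approach is more self-contained and makes the mechanism transparent; the paper's is shorter but relies on an external identity (which, incidentally, is just the $2\times 2$ determinant identity for the ``Hosoya'' analogue of Cassini and is proved by essentially the same Cassini-style cancellation you carried out).
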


\begin{proof} These follow from the identity
$$(-1)^rH_{r,k+1}H_{r,k}+ (-1)^{r+1} H_{r+1,k+1}H_{r-1,k} = 1, 
$$
from Blair \cite{BlairRigoAntaraGP}. 
\end{proof}

More generally, using \eqref{HoyaAsGenFibo} and \cite[Theorem 5]{florezHiguitaJunesGCD} the previous result generalizes to the following proposition. 

\begin{proposition} \label{GCDProperty}
For $1\le k\le r$ and $m\ge 1,~w\ge 0$ we have  
$$
\gcd(H_{r,m}, H_{r+w,m})\mid F_{w}\quad {\text{\rm and}}\quad \gcd(H_{r+m,m}, H_{r+m+w,m+w})\mid F_{w}.
$$
\end{proposition}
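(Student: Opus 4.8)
The plan is to observe that both divisibility claims are instances of a single fact about generalized Fibonacci sequences, obtained by specializing to the two diagonal directions of ${\mathcal H}$. For the first claim, I would fix the second index $m$ and let the first index vary; by the left recurrence in \eqref{Hosoya:Seq} (equivalently by \eqref{HoyaAsGenFibo}) the sequence $A_n:=H_{n,m}$ satisfies the Fibonacci recurrence $A_n=A_{n-1}+A_{n-2}$. For the second claim, the key observation is that the two entries $H_{r+m,m}$ and $H_{r+m+w,m+w}$ lie on a common backslash diagonal, since both have the same difference $(r+m)-m=(r+m+w)-(m+w)=r$ between their indices; by the right recurrence in \eqref{Hosoya:Seq} the sequence $B_t:=H_{t+r,t}$ (with $r$ fixed) again satisfies $B_t=B_{t-1}+B_{t-2}$, and $B_m=H_{r+m,m}$, $B_{m+w}=H_{r+m+w,m+w}$.

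The engine for both cases is the addition formula valid for any sequence $C$ obeying the Fibonacci recurrence, namely $C_{n+w}=F_wC_{n+1}+F_{w-1}C_n$ for $w\ge 1$ (this is linear in $C$ and is checked directly for $w=1,2$, then follows by induction). Writing $d:=\gcd(C_n,C_{n+w})$, from $F_wC_{n+1}=C_{n+w}-F_{w-1}C_n$ we get $d\mid F_wC_{n+1}$. Since consecutive entries of ${\mathcal H}$ along either diagonal are coprime by Proposition~\ref{RelativePrimes}, we have $\gcd(C_n,C_{n+1})=1$, hence $\gcd(d,C_{n+1})=1$ and therefore $d\mid F_w$. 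Applying this with $C=A$ and $n=r$ gives $\gcd(H_{r,m},H_{r+w,m})\mid F_w$, and with $C=B$ and $n=m$ gives $\gcd(H_{r+m,m},H_{r+m+w,m+w})\mid F_w$; the degenerate case $w=0$ is immediate since $F_0=0$. This is precisely \cite[Theorem 5]{florezHiguitaJunesGCD} specialized to the generalized Fibonacci sequences running along the slash and backslash diagonals, so in the write-up I would simply quote that theorem once the two diagonals have been identified.

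The step I expect to require the most care is the bookkeeping of indices rather than any genuine difficulty: one must confirm that $H_{r+m,m}$ and $H_{r+m+w,m+w}$ really sit on the same backslash diagonal and that the relevant consecutive pair is exactly the one covered by the second gcd in Proposition~\ref{RelativePrimes} (here $\gcd(H_{r+m+1,m+1},H_{r+m,m})=1$), and likewise to match the initial-condition indexing of \eqref{HoyaAsGenFibo} so that the addition formula is applied with the correct shift. Once the diagonals are correctly identified, the divisibility conclusion follows at once from the addition formula together with the coprimality of neighbors.
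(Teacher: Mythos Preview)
Your proposal is correct and follows exactly the paper's approach: identify the two families of entries as generalized Fibonacci sequences along the slash and backslash diagonals via \eqref{HoyaAsGenFibo} and \eqref{Hosoya:Seq}, then invoke \cite[Theorem 5]{florezHiguitaJunesGCD}. The paper's proof is a one-line citation of that theorem; you have additionally unpacked its content (the addition formula together with coprimality of consecutive terms from Proposition~\ref{RelativePrimes}), which is fine but not required.
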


\begin{proposition}[Proposition 2.1 \cite{Ching}]\label{LinesWithCoposite}
If $1\le k \le r$ are positive integers, then 
$$H_{r,k} \equiv  0 \bmod F_{\gcd(r+2,k+1)}\quad \text{ and } \quad  H_{r,k} \equiv  0 \bmod F_{\gcd(r+2,k)}.$$
\end{proposition}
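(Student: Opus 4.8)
The plan is to prove the divisibility claim $H_{r,k} \equiv 0 \pmod{F_{\gcd(r+2,k+1)}}$ directly from the determinant expression \eqref{eq:maineq}, and then obtain the second congruence by a symmetric argument. Set $d = \gcd(r+2, k+1)$. The key observation is that both $r+2$ and $k+1$ are multiples of $d$, so I would try to rewrite $H_{r,k}$ using the identity \eqref{definitionHrk2}, namely $H_{r,k} = F_{k+1}F_{r-k+2} - F_k F_{r-k+1}$, because the index $k+1$ and the combined index $(r-k+2)+(k+1) = r+3$ do not immediately expose $F_d$. Instead, I expect the cleanest route is through the identity \eqref{definitionHrk1}, $H_{r,k} = F_{k-1}F_{r-k+2} + F_k F_{r-k}$, examined modulo $F_d$.

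\medskip

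First I would recall the standard divisibility fact that $F_m \mid F_n$ whenever $m \mid n$, together with the reduction of Fibonacci numbers modulo $F_d$. The crucial tool is the well-known congruence $F_{a+b} \equiv F_a F_{b+1} + F_{a-1}F_b \pmod{\cdot}$ specialized so that when an index is reduced modulo $d$, the Fibonacci number $F_{\text{index}}$ reduces in a controlled way modulo $F_d$. Concretely, writing each index appearing in $H_{r,k}$ in terms of residues modulo $d$, I would use $r+2 \equiv 0$ and $k+1 \equiv 0 \pmod d$ to express the four indices $k-1$, $r-k+2$, $k$, and $r-k$ modulo $d$: for instance $k-1 \equiv -2$, $r-k+2 \equiv (r+2)-k \equiv -k \equiv 1$, $k \equiv -1$, and $r-k \equiv (r+2)-k-2 \equiv -k-2 \equiv -1 \pmod d$. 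The goal is then to show the resulting combination of Fibonacci numbers, after applying the multiplicative reduction $F_{qd+s} \equiv F_{qd}F_{s+1} + F_{qd-1}F_s$ and using $F_d \mid F_{qd}$, collapses to $0 \pmod{F_d}$.

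\medskip

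The heart of the argument is therefore a reduction lemma: if $a \equiv \alpha$ and $b \equiv \beta \pmod d$ with the total index of $H_{r,k}$ behaving linearly, then $H_{r,k} \equiv H_{\alpha',\beta'} \pmod{F_d}$ for a suitable smaller representative, and a direct check shows the representative vanishes modulo $F_d$ because the period structure of Fibonacci numbers modulo $F_d$ forces $F_{qd} \equiv 0$ and $F_{qd \pm 1} \equiv (\pm 1)^{q-1} \pmod{F_d}$. Alternatively, and perhaps more transparently, I would use the diagonal description \eqref{HoyaAsGenFibo}: the entries along a fixed slash diagonal form a generalized Fibonacci sequence $G_n^{(m)}$, and Proposition \ref{GCDProperty} already records that $F_w \mid \gcd(H_{r,m}, H_{r+w,m})$-type divisibilities hold along diagonals. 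Exploiting that $H_{r,k}$ lies at a lattice position where the slash-index $r+2$ and the backslash-index $k+1$ are both divisible by $d$, one can locate a boundary entry of the triangle (a Fibonacci or Lucas number with index divisible by $d$) to which $H_{r,k}$ is congruent modulo $F_d$, and that boundary entry is divisible by $F_d$ by the classical $F_d \mid F_{md}$ rule.

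\medskip

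The main obstacle will be bookkeeping the index reductions so that the two separate moduli, $F_{\gcd(r+2,k+1)}$ and $F_{\gcd(r+2,k)}$, emerge from a single uniform computation rather than two ad hoc checks. In particular, the second congruence uses $\gcd(r+2,k)$ rather than $\gcd(r+2,k+1)$, so the roles of the four indices shift, and I must verify that the symmetry of $H_{r,k}$ in its two Fibonacci factors (evident from the determinant form) genuinely interchanges the two statements. I expect the cleanest presentation is to prove one congruence carefully via the reduction lemma above and then invoke the symmetry $H_{r,k}$ inherits from swapping the rows of the defining determinant \eqref{eq:maineq} to deduce the other, so that the delicate modular arithmetic is carried out only once.
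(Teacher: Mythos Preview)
The paper does not actually prove this proposition; it is quoted from \cite{Ching} without argument. So there is no ``paper's own proof'' to compare against here, but your proposal deserves comment because it overlooks the direct two-line argument.

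You dismiss formula \eqref{definitionHrk2} on the grounds that ``the index $k+1$ and the combined index $(r-k+2)+(k+1) = r+3$ do not immediately expose $F_d$.'' But the relevant second index is not $r+3$; it is $r-k+1$, which appears explicitly in \eqref{definitionHrk2}. With $d = \gcd(r+2, k+1)$ we have $d \mid k+1$ and $d \mid (r+2)-(k+1) = r-k+1$, hence $F_d \mid F_{k+1}$ and $F_d \mid F_{r-k+1}$ by the standard divisibility rule $F_m \mid F_n$ whenever $m \mid n$. Then
\[
H_{r,k} = F_{k+1}F_{r-k+2} - F_k F_{r-k+1} \equiv 0 \pmod{F_d}
\]
immediately. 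The second congruence is identical: with $d' = \gcd(r+2, k)$ one has $d' \mid k$ and $d' \mid (r+2)-k = r-k+2$, so $F_{d'}$ divides both $F_k$ and $F_{r-k+2}$, and the very same formula \eqref{definitionHrk2} finishes. No symmetry argument, no second formula, no case split is needed.

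Your proposed route through index reduction modulo $d$, congruences of the shape $F_{qd\pm 1} \equiv (\pm 1)^{q-1} \pmod{F_d}$, and the diagonal recurrence \eqref{HoyaAsGenFibo} is not wrong in principle, but it replaces a one-step divisibility observation with substantial bookkeeping that you yourself flag as the ``main obstacle.'' That obstacle is entirely avoidable: the gcd of two integers divides their difference, and that difference is already one of the four Fibonacci indices sitting in \eqref{definitionHrk2}.
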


The proof of the following proposition follows using \cite[Proposition 2.1]{Ching}. However, here we give a self-contained proof.  

\begin{proposition} \label{CongruencesMod2}
The point $H_{r,k} \equiv  0 \bmod 2$ if and only if $r \equiv 1 \bmod 3$ for $r\ge 1$.
\end{proposition}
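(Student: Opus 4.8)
The plan is to reduce everything modulo $2$ using the explicit formula $H_{r,k}=F_{k-1}F_{r-k+2}+F_{k}F_{r-k}$ from \eqref{definitionHrk1}, together with the single classical fact about the parity of Fibonacci numbers: $F_m$ is even if and only if $3\mid m$. First I would record that this parity rule holds for \emph{all} integers $m$, including $m=0$ and negative indices, since $F_{-n}=(-1)^{n-1}F_n\equiv F_n\pmod{2}$ and $3\mid n \iff 3\mid(-n)$. This extension matters because the formula involves indices such as $r-k$ and $r-k+2$ that may be $0$ or negative on the boundary of the triangle (for instance $H_{r,r}=F_{r-1}$ and $H_{r,1}=F_{r-1}$). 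Consequently $F_m\bmod 2$ depends only on $m\bmod 3$, so the residue $H_{r,k}\bmod 2$ depends only on the pair $(r\bmod 3,\ k\bmod 3)$, and the whole statement becomes a finite check.

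The key observation is that a sum of two products is even exactly when the two products have the same parity, and a product $F_aF_b$ is odd exactly when $3\nmid a$ and $3\nmid b$. Writing $P_1=F_{k-1}F_{r-k+2}$ and $P_2=F_kF_{r-k}$, I would therefore show that $P_1\equiv P_2\pmod{2}$ holds precisely when $r\equiv 1\pmod{3}$, independently of $k$. The cleanest way to run this is to split into the three cases $k\equiv 0,1,2\pmod{3}$ and track the parities of the four indices $k-1,\ k,\ r-k,\ r-k+2$ modulo $3$ as functions of $r\bmod 3$.

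For $k\equiv 0\pmod{3}$ the factor $F_k$ is even, so $P_2$ is even, while $F_{k-1}$ is odd; hence $H_{r,k}$ is even iff $3\mid(r-k+2)$, i.e. iff $r\equiv 1\pmod{3}$. For $k\equiv 1\pmod{3}$ the factor $F_{k-1}$ is even, so $P_1$ is even, while $F_k$ is odd; hence $H_{r,k}$ is even iff $3\mid(r-k)$, again iff $r\equiv 1\pmod{3}$. For $k\equiv 2\pmod{3}$ both $F_{k-1}$ and $F_k$ are odd, so $P_1$ is odd iff $r\not\equiv 0$ and $P_2$ is odd iff $r\not\equiv 2\pmod{3}$; comparing the three subcases $r\equiv 0,1,2$ shows the two products share parity only when $r\equiv 1\pmod{3}$. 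In every case $H_{r,k}\equiv 0\pmod{2}$ exactly when $r\equiv 1\pmod{3}$, which is the claim.

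I do not expect any genuine obstacle here, as the statement is a finite verification once it is set up correctly. The only points requiring care are (i) justifying that the parity rule for Fibonacci numbers extends to zero and negative indices, so that the boundary entries of $\mathcal H$ are handled by the same argument, and (ii) organizing the case analysis so that it is transparent that the answer depends on $r\bmod 3$ alone and not on $k$. An alternative, equally short route would be to fix $k$ and note that $h_r:=H_{r,k}\bmod 2$ satisfies the Fibonacci recurrence $h_r\equiv h_{r-1}+h_{r-2}\pmod{2}$, whose Pisano period is $3$; pinning down one explicit even value such as $H_{r,r}=F_{r-1}$ then forces the pattern. However, that version must treat the two diagonal boundaries separately, which is why I prefer the direct computation from \eqref{definitionHrk1}.
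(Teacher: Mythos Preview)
Your argument is correct. The case analysis on $k\bmod 3$ is clean, and you are right to note that the parity rule $F_m\equiv 0\pmod 2 \iff 3\mid m$ extends to all integer indices, which handles the boundary entries uniformly.

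The paper takes a different route, essentially the alternative you sketch at the end: it fixes $k$, observes that $H_{r,k}$ satisfies the Fibonacci recurrence in $r$ and hence is periodic modulo $2$ with period $3$, and then computes $H_{0,k},H_{1,k},H_{2,k}\pmod 2$ directly from the determinant form \eqref{eq:maineq}, obtaining $1,0,1$. Your concern that this version ``must treat the two diagonal boundaries separately'' turns out not to apply: the paper works with the determinant for general $k$ (using $F_{-n}\equiv F_n\pmod 2$) rather than pinning down a single explicit entry, so no separate boundary argument is needed. The trade-off is that the paper's three computations require a bit of Fibonacci-identity manipulation (e.g.\ showing $F_{k+1}F_{k-1}-F_k^2\equiv 1\pmod 2$), whereas your approach replaces that with a $3\times 3$ case split on $(r\bmod 3,k\bmod 3)$ using only the parity rule for $F_m$. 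Both are short and elementary; yours is arguably more transparent about why the answer is independent of $k$.
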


\begin{proof} For fixed $k$, $\{H_{r,k}\}_{r\ge 0}$ obeys a Fibonacci recurrence. So, in particular it is periodic in $r$ modulo $2$ with period $3$. Calculating $H_{r,k}$ for $r=0,1,2$ modulo $2$ and using that $F_{-n}\equiv F_n\pmod 2$ for all $n$, we get
\begin{eqnarray*}
H_{0,k} & = & 
	\left|\begin{matrix} F_{k+1} & F_k\\ 
				   F_{-k+1} & F_{-k+2}
	\end{matrix}\right| 
	\equiv 
	\left|\begin{matrix} F_{k+1} & F_k\\ 
				    F_{k-1} & F_{k-2}
	\end{matrix}\right|\pmod 2 \\[10pt]
	&\equiv&  F_{k+1}^2-F_k F_{k-1}\pmod 2\\
 	& \equiv &  (F_k+F_{k-1})^2-F_kF_{k-1}\pmod 2 \\
	&\equiv &    F_k^2+F_{k-1}^2+F_kF_{k-1}\pmod 2\equiv 1,
\end{eqnarray*}

\begin{eqnarray*}
H_{1,k}  &=&  \left|\begin{matrix} 
				F_{k+1} & F_k\\ 
				F_{-k+2} & F_{-k+3}
		 \end{matrix}\right| \\
		 &\equiv& 
		 \left|\begin{matrix} 
		 		F_{k+1} & F_k\\ 
				F_{k-2} & F_k
		\end{matrix}\right| \\
		&\equiv& F_k(F_{k+1}-F_{k-2})\equiv 0\pmod 2,
\end{eqnarray*}
and 
\begin{eqnarray*}
H_{2,k}  &=&  \left|\begin{matrix} 
				F_{k+1} & F_k\\ 
				F_{3-k} & F_{4-k}
			\end{matrix}\right| \\
		&\equiv& 
		\left|\begin{matrix} 
				F_{k+1} & F_k\\ 
				F_k & F_{k-1}
		\end{matrix}\right|\pmod 2\\
		&\equiv& F_{k+1}F_{k-1}-F_k^2\equiv 1\pmod 2. 
\end{eqnarray*} 
This completes the proof.  
\end{proof}

We recall the Legendre symbol 
$$\legendre{5}{q}=
\begin{cases} 
      1 & \text{ if } q \equiv \pm 1 \bmod 5 \\
      -1 & \text{ if } q \equiv \pm 2 \bmod 5 \\
       0 & \text{ if } q \equiv 0 \bmod 5 \\
\end{cases}
$$
as well as the following well-known fact.
\begin{lemma} \label{Mod5FiboLucas}
Let $p$ be a prime. Then 
$$F_{\left(p- \legendre{5}{p}\right)}\equiv 0 \bmod p.$$
\end{lemma}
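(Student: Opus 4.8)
The plan is to prove this classical fact via Binet's formula together with the Frobenius endomorphism on a suitable finite field. Let $\alpha=(1+\sqrt5)/2$ and $\beta=(1-\sqrt5)/2$ be the two roots of $x^2-x-1$, so that $\alpha+\beta=1$, $\alpha\beta=-1$, $\alpha-\beta=\sqrt5$, and $F_n=(\alpha^n-\beta^n)/\sqrt5$ for all $n\ge0$. The two small primes are dispatched by hand: when $p=5$ we have $\legendre{5}{p}=0$ and $F_5=5\equiv0\bmod 5$, while when $p=2$ we have $\legendre{5}{p}=-1$ and $F_3=2\equiv0\bmod2$. From now on I assume $p$ is an odd prime with $p\ne5$, so that $5$ is invertible modulo $p$, the element $\sqrt5$ makes sense in either $\mathbb{F}_p$ or $\mathbb{F}_{p^2}$, and $\alpha\ne\beta$. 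I would also remark that the paper's description of $\legendre{5}{p}$ through $p\bmod 5$ coincides with the usual Legendre symbol by quadratic reciprocity (as $5\equiv1\bmod4$), so no sign bookkeeping is needed.

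First I would treat the case $\legendre{5}{p}=1$, i.e.\ $5$ is a quadratic residue modulo $p$. Here $\sqrt5\in\mathbb{F}_p$, so $\alpha,\beta\in\mathbb{F}_p$ and both are nonzero (since $\alpha\beta=-1$). Fermat's little theorem gives $\alpha^{p-1}\equiv\beta^{p-1}\equiv1\bmod p$, whence
$$F_{p-1}=\frac{\alpha^{p-1}-\beta^{p-1}}{\sqrt5}\equiv\frac{1-1}{\sqrt5}=0\bmod p.$$
Since $p-\legendre{5}{p}=p-1$ in this case, this is exactly the assertion.

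Next I would treat the case $\legendre{5}{p}=-1$, where $5$ is a nonresidue and $\sqrt5$ generates the quadratic extension $\mathbb{F}_{p^2}$. The key computation is the $p$-th power (Frobenius) of $\alpha$: by Euler's criterion $(\sqrt5)^p=5^{(p-1)/2}\sqrt5=\legendre{5}{p}\sqrt5=-\sqrt5$. Because the Frobenius map $x\mapsto x^p$ is a field homomorphism of $\mathbb{F}_{p^2}$ fixing $\mathbb{F}_p$ pointwise (in particular fixing $1$ and $2^{-1}$), it follows that $\alpha^p=\beta$ and symmetrically $\beta^p=\alpha$. Hence $\alpha^{p+1}=\beta\alpha=-1$ and $\beta^{p+1}=\alpha\beta=-1$, so
$$F_{p+1}=\frac{\alpha^{p+1}-\beta^{p+1}}{\sqrt5}=\frac{-1-(-1)}{\sqrt5}=0\bmod p,$$
and since $p-\legendre{5}{p}=p+1$ here, the lemma follows.

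The only genuine obstacle is the nonresidue case: one must justify passing to $\mathbb{F}_{p^2}$ and verify that Frobenius sends $\sqrt5$ to $-\sqrt5$, which is merely Euler's criterion in disguise. Once that identity is in hand the divisibility falls out immediately from Binet's formula, so the whole argument reduces to a clean dichotomy according to the value of $\legendre{5}{p}$.
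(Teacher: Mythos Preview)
Your proof is correct and is the standard argument for this classical divisibility law. The paper does not actually prove this lemma: it is introduced as ``the following well-known fact'' and stated without proof, so there is no alternative approach in the paper to compare against.
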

Since $F_a\mid F_b$ whenever $a\mid b$, it follows that also
$$
F_{n\left(p- \legendre{5}{p}\right)}\equiv 0 \bmod p
$$
holds for every positive integer $n$. Recall that the Fibonacci sequence is periodic modulo every positive integer $m$ with a period  sometimes denoted $\rho(m)$ and called the Pisano period.  It is also known that 
if $p$ is prime then $\rho(p)\mid p-1$ if $p\equiv \pm 1 \pmod 5$ and $\rho(p)\mid 2(p+1)$ if $p\equiv \pm 2 \pmod 5$. The following corollary is immediate. 

\begin{corollary} \label{CoroMod5FiboLucas}
Let $p$ be a prime and let $t\ge 1$ be an integer. Then
\begin{enumerate}
    \item \label{CongPart2} If  $p \equiv \pm 2 \bmod 5$, then $F_{p-(2t+1)}\equiv F_{2(t+1)} \bmod p$.
    \item \label{CongPart1} If $p \equiv \pm 1 \bmod 5$, then $F_{p-2t}    \equiv F_{2t-1}   \bmod p$.
\end{enumerate}
\end{corollary}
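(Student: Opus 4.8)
The plan is to deduce both congruences from a single Fibonacci subtraction identity together with two congruences modulo $p$. First I would record the identity
\[
F_{a-b}=(-1)^b\bigl(F_aF_{b+1}-F_{a+1}F_b\bigr),
\]
which comes from the addition formula $F_{m+n}=F_{m+1}F_n+F_mF_{n-1}$ by setting $m=a$, $n=-b$ and invoking $F_{-n}=(-1)^{n-1}F_n$. Writing $\epsilon=\legendre 5p$, the two inputs I would feed into this identity are $F_{p-\epsilon}\equiv 0\pmod p$, which is precisely Lemma~\ref{Mod5FiboLucas}, and the companion congruence $F_p\equiv\epsilon\pmod p$. The whole argument then reduces to a one-line substitution in each case.

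For part~\eqref{CongPart1}, where $p\equiv\pm1\pmod 5$ and $\epsilon=1$, I would first note $F_{p-1}\equiv 0\pmod p$ from the lemma, while the stated Pisano fact $\rho(p)\mid p-1$ gives $F_p\equiv F_{1+(p-1)}\equiv F_1=1\pmod p$, and hence $F_{p+1}=F_p+F_{p-1}\equiv 1\pmod p$. Applying the subtraction identity with $a=p$ and $b=2t$ yields $F_{p-2t}\equiv F_pF_{2t+1}-F_{p+1}F_{2t}\equiv F_{2t+1}-F_{2t}=F_{2t-1}\pmod p$, as required. (Equivalently, one can argue purely by periodicity: since $\rho(p)\mid p-1$, $F_{p-2t}\equiv F_{1-2t}\pmod p$, and $F_{1-2t}=F_{-(2t-1)}=F_{2t-1}$ because $2t-1$ is odd.)

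For part~\eqref{CongPart2}, where $p\equiv\pm2\pmod 5$ and $\epsilon=-1$, the lemma gives $F_{p+1}\equiv 0\pmod p$, and here I would use $F_p\equiv-1\pmod p$. Applying the subtraction identity with $a=p$ and $b=2t+1$ gives
\[
F_{p-(2t+1)}\equiv-\bigl(F_pF_{2t+2}-F_{p+1}F_{2t+1}\bigr)\equiv-\bigl((-1)F_{2t+2}-0\bigr)=F_{2t+2}=F_{2(t+1)}\pmod p,
\]
which is the claim.

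The main obstacle is pinning down the \emph{sign} in $F_p\equiv\epsilon\pmod p$ when $\epsilon=-1$: unlike the case $\epsilon=1$, the divisibility $\rho(p)\mid 2(p+1)$ does not by itself determine $F_p$, since Cassini's identity $F_pF_{p+2}-F_{p+1}^2=(-1)^{p+1}$ together with $F_{p+1}\equiv 0$ only yields $F_p^2\equiv 1$, i.e.\ $F_p\equiv\pm1$. I would resolve this by citing the well-known companion to Lemma~\ref{Mod5FiboLucas}, namely $F_p\equiv\legendre 5p\pmod p$; alternatively it follows from $L_p\equiv 1\pmod p$ via $L_p=2F_{p+1}-F_p$ and $F_{p+1}\equiv 0$, or from the Frobenius relation $\phi^p\equiv\bar\phi$ in $\mathbb{F}_{p^2}$, which gives $Q^{p+1}\equiv-I$ and hence $F_p\equiv-1$. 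Once this sign is fixed, both parts are immediate from the computation above.
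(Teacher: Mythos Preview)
Your proof is correct. For the $p\equiv\pm1$ case your parenthetical periodicity argument is exactly the paper's: it writes $F_{p-2t}=F_{(p-1)-(2t-1)}\equiv F_{-(2t-1)}=F_{2t-1}\pmod p$ using $\rho(p)\mid p-1$, and in fact this is the \emph{only} case the paper spells out. For the $p\equiv\pm2$ case the paper merely declares the corollary ``immediate'' from $\rho(p)\mid 2(p+1)$, which, as you rightly observe, does not by itself pin down the sign; your route through the subtraction identity together with $F_p\equiv\legendre5p\pmod p$ (equivalently $\alpha^{p+1}\equiv-1$, which the paper does invoke later in the proof of Theorem~\ref{ThmPrimeModePrime}) is the natural way to fill this in. So your argument is essentially the paper's for one part and a clean, explicit completion of what the paper leaves implicit for the other.
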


For example, for Part \eqref{CongPart1} we write 
$$F_{p-2t}=F_{p-1-(2t-1)}\equiv F_{-(2t-1)}\pmod p\equiv F_{2t-1}\pmod p \quad \text{ since }\quad \rho(p)\mid p-1.$$  

The following proposition shows that the central column of the triangle ${\mathcal H}$ is almost free of primes and the other two central columns are free of primes. 

\begin{proposition} \label{MidColomns} If  $t$ is a positive integer, then $H_{2t,t}=F_{t +1}^2$ and $H_{2t-1,t}=F_{t -1}F_{t+2}$.

\end{proposition}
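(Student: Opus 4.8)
The plan is to evaluate the closed form \eqref{definitionHrk1}, namely $H_{r,k}=F_{k-1}F_{r-k+2}+F_kF_{r-k}$, at the two relevant index pairs and then collapse the resulting Fibonacci products using only elementary identities. Both assertions should reduce to a single substitution followed by one line of Fibonacci arithmetic, so the work is in choosing the right regrouping rather than in any genuine difficulty.

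First I would substitute $r=2t$ and $k=t$ into \eqref{definitionHrk1}. Here $r-k+2=t+2$ and $r-k=t$, which gives $H_{2t,t}=F_{t-1}F_{t+2}+F_t^{2}$. To finish, I would invoke the difference of squares $F_{t+1}^{2}-F_t^{2}=(F_{t+1}-F_t)(F_{t+1}+F_t)$ together with the basic relations $F_{t+1}-F_t=F_{t-1}$ and $F_{t+1}+F_t=F_{t+2}$, so that $F_{t+1}^{2}-F_t^{2}=F_{t-1}F_{t+2}$. Rearranging yields $H_{2t,t}=F_{t-1}F_{t+2}+F_t^{2}=F_{t+1}^{2}$, as claimed.

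For the second identity I would substitute $r=2t-1$ and $k=t$ into \eqref{definitionHrk1}. Now $r-k+2=t+1$ and $r-k=t-1$, so $H_{2t-1,t}=F_{t-1}F_{t+1}+F_tF_{t-1}$. Factoring out $F_{t-1}$ and using $F_{t+1}+F_t=F_{t+2}$ once more gives $H_{2t-1,t}=F_{t-1}(F_{t+1}+F_t)=F_{t-1}F_{t+2}$.

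There is no serious obstacle here: the only mild points are to recognize the difference of squares in the first case and to remember to pull $F_{t-1}$ out of the sum in the second. As a sanity check I would verify the boundary value $t=1$, where $F_0=0$ enters, confirming $H_{2,1}=F_0F_3+F_1^{2}=1=F_2^{2}$ and $H_{1,1}=F_0F_3=0$, in agreement with Table \ref{tabla2}, so the closed forms remain valid in the edge case.
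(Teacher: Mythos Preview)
Your proof is correct and follows essentially the same route as the paper: substitute into \eqref{definitionHrk1}, then use $F_{t+1}^2-F_t^2=F_{t-1}F_{t+2}$ for the first identity and factor $F_{t-1}$ out for the second. The only difference is that you spell out the difference-of-squares step and add the $t=1$ sanity check, both of which are fine elaborations.
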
 

\begin{proof} From equation \eqref{definitionHrk1} we obtain $H_{2t,t}=F_{t -1}F_{t +2}+F_{t}^2=\big(F_{t+1}^2-F_{t}^2\big)+F_{t}^2$.
Also from equation \eqref{definitionHrk1} we obtain $H_{2t-1,t}=F_{t -1}F_{t+1}+F_{t}F_{t-1}=F_{t -1}(F_{t+1}+F_{t})=F_{t-1}F_{t+2}$.
\end{proof}

\begin{theorem} \label{ThmPrimeModePrime} Let $p$ be a prime number and let $t>0$ be an integer, then the following hold: 
\begin{enumerate}
  \item \label{PropoCongPart1} If $p \equiv \pm 1 \bmod 5$, then  $H_{p,2t}   \equiv 2 H_{4(t-1),2(t-1)} \equiv 2 F_{2 t-1}^2  \bmod p$.
  \item \label{PropoCongPart2} If $p \equiv \pm 2 \bmod 5$, then  $H_{p,2t+1} \equiv 2 H_{4t,2t}-1       \equiv 2 F_{2 t+1}^2-1\bmod p$.
\end{enumerate}
\end{theorem}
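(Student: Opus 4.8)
The plan is to handle both parts the same way: expand $H_{p,k}$ using identity \eqref{definitionHrk1}, reduce the two Fibonacci factors whose indices sit near $p$ to small-index Fibonacci numbers via Corollary \ref{CoroMod5FiboLucas}, collapse the result with elementary Fibonacci identities, and finally recognize the answer as $2H_{4(t-1),2(t-1)}$ or $2H_{4t,2t}-1$ through Proposition \ref{MidColomns}.

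For Part \eqref{PropoCongPart1}, assuming $p\equiv\pm1\bmod 5$, I would first write $H_{p,2t}=F_{2t-1}F_{p-2t+2}+F_{2t}F_{p-2t}$ from \eqref{definitionHrk1}. Then I would apply Corollary \ref{CoroMod5FiboLucas}\eqref{CongPart1} directly to get $F_{p-2t}\equiv F_{2t-1}\bmod p$, and apply the same part with $t$ replaced by $t-1$ to get $F_{p-2t+2}=F_{p-2(t-1)}\equiv F_{2t-3}\bmod p$. This gives $H_{p,2t}\equiv F_{2t-1}(F_{2t-3}+F_{2t})\bmod p$; since $F_{2t-1}-F_{2t-3}=F_{2t-2}=F_{2t}-F_{2t-1}$, the inner sum collapses to $F_{2t-3}+F_{2t}=2F_{2t-1}$, so $H_{p,2t}\equiv 2F_{2t-1}^2\bmod p$. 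Finally, taking $s=2(t-1)$ in Proposition \ref{MidColomns} yields $H_{4(t-1),2(t-1)}=H_{2s,s}=F_{s+1}^2=F_{2t-1}^2$, which closes the chain.

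For Part \eqref{PropoCongPart2}, assuming $p\equiv\pm2\bmod 5$, I would write $H_{p,2t+1}=F_{2t}F_{p-2t+1}+F_{2t+1}F_{p-2t-1}$, then use Corollary \ref{CoroMod5FiboLucas}\eqref{CongPart2} to get $F_{p-2t-1}=F_{p-(2t+1)}\equiv F_{2t+2}\bmod p$, and the same part with $t$ replaced by $t-1$ to get $F_{p-2t+1}=F_{p-(2(t-1)+1)}\equiv F_{2t}\bmod p$. This leaves $H_{p,2t+1}\equiv F_{2t}^2+F_{2t+1}F_{2t+2}\bmod p$. Here I would introduce Cassini's identity in the form $F_{2t}^2=F_{2t-1}F_{2t+1}-1$ to produce the constant term; substituting and factoring gives $F_{2t+1}(F_{2t-1}+F_{2t+2})-1$, and the same telescoping as before ($F_{2t+2}-F_{2t+1}=F_{2t}=F_{2t+1}-F_{2t-1}$) gives $F_{2t-1}+F_{2t+2}=2F_{2t+1}$, so $H_{p,2t+1}\equiv 2F_{2t+1}^2-1\bmod p$. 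Proposition \ref{MidColomns} again identifies $H_{4t,2t}=F_{2t+1}^2$, finishing the triple congruence.

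The computations are routine, so I expect the only delicate points to be bookkeeping ones. First, when reducing the ``shifted'' factor ($F_{p-2t+2}$ in Part 1 and $F_{p-2t+1}$ in Part 2) I must invoke Corollary \ref{CoroMod5FiboLucas} with its index parameter decreased by one rather than with the $t$ appearing in the theorem statement; getting this shift right is exactly what makes the two small-index Fibonacci factors land on indices close enough to telescope against the other factor. Second, the structural difference between the parts---the $-1$ present in Part \eqref{PropoCongPart2} but absent in Part \eqref{PropoCongPart1}---should be traced cleanly to the single use of Cassini's identity, whose correction term $(-1)^{2t}=1$ supplies precisely that constant, whereas Part 1 admits a purely linear collapse with no constant generated; I would want to confirm this is the only place a constant enters.
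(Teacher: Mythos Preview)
Your proposal is correct and follows essentially the same approach as the paper: expand $H_{p,k}$ via \eqref{definitionHrk1}, reduce the large-index Fibonacci factors modulo $p$, and collapse using $F_{n-2}+F_{n+1}=2F_n$ together with Cassini's identity and Proposition~\ref{MidColomns}. The one minor difference is that for the reduction of $F_{p-2t+1}$ in Part~\eqref{PropoCongPart2} the paper detours through Binet's formula and $\alpha^{p+1}\equiv\beta^{p+1}\equiv -1\pmod p$, whereas you obtain the same congruence more directly by reapplying Corollary~\ref{CoroMod5FiboLucas}\eqref{CongPart2} with the parameter shifted to $t-1$; your route is slightly cleaner but not substantively different.
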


\begin{proof} For part \eqref{PropoCongPart1}, by equation \eqref{definitionHrk1} we know that $H_{p,2t}= F_{2 t-1} F_{p-2(t-1)}+F_{2 t} F_{p-2t}$.
Using Corollary \ref{CoroMod5FiboLucas} we get that 
$$
H_{p,2t}\equiv F_{2t-1} F_{2t-3}+F_{2t} F_{2t-1} \equiv F_{2t-1}(F_{2t-3}+F_{2t})\pmod p\equiv 2F_{2t-1}^2\pmod p.
$$
As for Part \eqref{PropoCongPart2}, from the definition of $H_{r,k}$ we have $H_{p,2t+1}= F_{2t}F_{p-(2t-1)}+F_{2t+1}F_{p-(2t+1)}$, 

Let us take 
$F_{p-(2t-1)}=F_{(p+1)-2t}=(\alpha^{p+1-2t}-\beta^{p+1-2t})/\sqrt{5}$, where $\alpha$, $\beta$ are the golden section and its conjugate.
Since $p=\pm 2 \bmod 5$, we have $\alpha^p\equiv \beta \bmod p$. So, $\alpha^{p+1}\equiv \alpha \beta \equiv -1 \bmod p$ and the same goes for $\beta^{p+1}\equiv -1 \pmod p$. Thus, 
$$F_{p-(2t-1)}=F_{p+1-2t}\equiv -1 (\alpha^{-2t}-\beta^{-2t})/\sqrt{5}\equiv -F_{-2t}\equiv F_{2t} \bmod p.$$
This together with Corollary \ref{CoroMod5FiboLucas} implies that 
$$
H_{p,2t+1}\equiv F_{2t}F_{2t}+F_{2t+1}F_{2(t+1)} \bmod p.
$$ 
It remains to check that 
$$
F_{2t}^2+F_{2t+1}F_{2t+2}=2F_{2t+1}^2-1
$$
which holds since it is equivalent to $F_{2t+1}(2F_{2t+1}-F_{2t+2})-F_{2t}^2=1$, which in turn is equivalent to $F_{2t+1}F_{2t-1}-F_{2t}^2=1$, a well-known identity. 
\end{proof}

As a corollary we have that for a  prime number $p \equiv \pm 1 \bmod 5$ and $p> 2  F_{2n+1}^2$, then  for $n=1,2,3$, and $4$, it holds that    
$H_{p,2n} \equiv 2  F_{2n-1}^2  \bmod p$. As a second corollary we have that for a prime number $p \equiv \pm 2 \bmod 5$ and $p> 2  F_{2n-1}^2-1$,   
then for $n=1,2,3,4$, and $5$, it holds  that $H_{p,2n-1} \equiv 2  F_{2n-1}^2-1  \bmod p$.

\begin{lemma}\label{LemmaStarPorperty} For $1\le k\le n$ and positive integer $t$ we have: 
\begin{enumerate}
  \item \label{LemmaStarPorpertyPart1} $H_{n+ t,k+ t}=F_{1+ t} H_{n,k}+F_{t} H_{n-1,k-1}$. 
   
 \item \label{LemmaStarPorpertyPart3} $H_{n+ t,k}=F_{1+ t} H_{n,k}+F_{t} H_{n-1,k}$.
 
\end{enumerate}
\end{lemma}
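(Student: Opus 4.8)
The plan is to recognize that both identities are instances of a single elementary fact about Fibonacci-type sequences, and then to verify them by a short computation using the closed form \eqref{definitionHrk1} together with the Fibonacci addition formula $F_{m+t}=F_{t+1}F_m+F_t F_{m-1}$.

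First I would record the underlying principle. If a sequence $\{a_j\}$ obeys the Fibonacci recurrence $a_j=a_{j-1}+a_{j-2}$, then for every integer $t\ge 0$ one has $a_{j+t}=F_{t+1}a_j+F_t a_{j-1}$; this follows by an immediate induction on $t$, the cases $t=0,1$ being $a_j$ and $a_{j+1}=a_j+a_{j-1}$ (using $F_0=0$, $F_1=F_2=1$), and the inductive step using $F_{t+2}=F_{t+1}+F_t$. Part \eqref{LemmaStarPorpertyPart3} is then exactly this principle applied to the slash-diagonal sequence $r\mapsto H_{r,k}$ for fixed $k$, which obeys $H_{r,k}=H_{r-1,k}+H_{r-2,k}$ by \eqref{Hosoya:Seq}. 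Part \eqref{LemmaStarPorpertyPart1} is the same principle applied to the backslash-diagonal sequence $j\mapsto H_{n-k+j,j}$ (here $n-k$ is held fixed), which obeys $H_{r,k}=H_{r-1,k-1}+H_{r-2,k-2}$, again by \eqref{Hosoya:Seq}.

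Because invoking the recurrence directly runs into a boundary issue when $k=1$ (the term $H_{n-1,k-1}$ then has second index $0$, outside the triangle), I would prefer to give a self-contained verification from the closed form. For Part \eqref{LemmaStarPorpertyPart3}, substitute \eqref{definitionHrk1} into $F_{t+1}H_{n,k}+F_t H_{n-1,k}$ and collect the terms carrying the common factors $F_{k-1}$ and $F_k$; each resulting bracket has the shape $F_{t+1}F_m+F_t F_{m-1}$, which the addition formula collapses to $F_{m+t}$, and one reads off $F_{k-1}F_{n+t-k+2}+F_k F_{n+t-k}=H_{n+t,k}$. For Part \eqref{LemmaStarPorpertyPart1}, expand $H_{n,k}$ and $H_{n-1,k-1}$ by \eqref{definitionHrk1}, group instead by the common factors $F_{n-k+2}$ and $F_{n-k}$, and apply the addition formula to the brackets $F_{t+1}F_{k-1}+F_t F_{k-2}=F_{k-1+t}$ and $F_{t+1}F_k+F_t F_{k-1}=F_{k+t}$, which reassembles to $F_{k+t-1}F_{n-k+2}+F_{k+t}F_{n-k}=H_{n+t,k+t}$.

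There is no real obstacle here beyond index bookkeeping: the only point requiring a little care is applying the Fibonacci addition formula with the correct shift in each bracket, and checking that the negative-index convention $F_{-n}=(-1)^{n-1}F_n$ keeps the identities valid in the few small cases (such as $k=1$) where a Fibonacci subscript drops to $0$ or below.
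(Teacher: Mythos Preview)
Your proof is correct and takes essentially the same approach as the paper: both invoke the general identity $A_{j+t}=F_{t+1}A_j+F_tA_{j-1}$ for Fibonacci-type sequences and apply it to the slash- and backslash-diagonal sequences of the triangle. Your additional closed-form verification via the addition formula is a nice supplement that handles the boundary case $k=1$ cleanly, something the paper does not explicitly address.
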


\begin{proof}
It is known that if $\{A_n\}_{n\ge 0}$ is a sequence obeying the Fibonacci recurrence $A_{n+2}=A_{n+1}+A_n$ for all $n\ge 0$,  then  
$A_{n+t}=F_{t+1}A_n+F_tA_{n-1}$ holds for all $t\ge 0$ and $n\ge 1$. Parts \eqref{LemmaStarPorpertyPart1} and \eqref{LemmaStarPorpertyPart3}   
follow immediately by noticing that  for fixed $n$  and $k$, the sequence $\{H_{n+t,k+t}\}_{t\ge 0}$ and the sequence $\{H_{n+t,k}\}_{t\ge 0}$ obey  
Fibonacci recurrences (for the first one, in   \eqref{eq:maineq} the second row of the determinant remains fixed as $t$ varies and for the  second one the   
first row of the determinant in  \eqref{eq:maineq} remains fixed when $t$ varies). 
\end{proof}

\begin{proposition}[Star property]\label{StarPorperty1}  Let $1\le k\le n$ be integers numbers and $p$ be an odd prime. Let  $d_p=p-\left(\frac{5}{p}\right)$.
If $H_{n,k} \equiv 0 \bmod p$, then $H_{n',k'}\equiv 0\pmod p$ for all pairs of positive integers $(n',k')$ with $n'\equiv n\pmod {d_p}$ and $k'\equiv k\pmod {d_p}$. 
\end{proposition}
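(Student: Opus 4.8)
The plan is to show that, modulo $p$, shifting either index by $d_p$ merely multiplies $H_{n,k}$ by a unit, so that the hypothesis $H_{n,k}\equiv 0$ propagates to the whole residue class. The two facts about $d_p$ that drive everything are $F_{d_p}\equiv 0\pmod p$ and $F_{d_p+1}\not\equiv 0\pmod p$. The first is exactly Lemma \ref{Mod5FiboLucas} (with $n=1$), since $d_p=p-\left(\frac{5}{p}\right)$. The second holds because consecutive Fibonacci numbers are coprime: from $\gcd(F_{d_p},F_{d_p+1})=1$ together with $p\mid F_{d_p}$ we get $p\nmid F_{d_p+1}$. Write $c:=F_{d_p+1}$, so that $c$ is a unit modulo $p$. (Note this also covers $p=5$, where $c\equiv 3$ rather than $\pm1$.)

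Next I would substitute $t=d_p$ into Lemma \ref{LemmaStarPorperty}. Part \eqref{LemmaStarPorpertyPart3} gives
$$H_{n+d_p,k}=F_{d_p+1}H_{n,k}+F_{d_p}H_{n-1,k}\equiv c\,H_{n,k}\pmod p,$$
and Part \eqref{LemmaStarPorpertyPart1} gives
$$H_{n+d_p,k+d_p}=F_{d_p+1}H_{n,k}+F_{d_p}H_{n-1,k-1}\equiv c\,H_{n,k}\pmod p,$$
in both cases because the term carrying the factor $F_{d_p}$ vanishes modulo $p$. Thus a shift of the first index by $d_p$, and a simultaneous diagonal shift of both indices by $d_p$, each scale $H$ by the unit $c$. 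Since these are polynomial identities in the indices (extending $H$ to all integer indices via the determinant \eqref{eq:maineq} and negative-index Fibonacci numbers), and since $c$ is invertible mod $p$, I may also run them backward, e.g. $H_{n-d_p,k}\equiv c^{-1}H_{n,k}\pmod p$.

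To isolate a pure shift of the second index I would compose a diagonal shift with a reverse first-index shift: from $H_{n,k}$ the diagonal shift yields $H_{n+d_p,k+d_p}\equiv c\,H_{n,k}$, and reversing the first index then yields $H_{n,k+d_p}\equiv c^{-1}\!\cdot c\,H_{n,k}=H_{n,k}\pmod p$. Hence shifting the second index by $d_p$ leaves $H$ unchanged mod $p$, while shifting the first index by $d_p$ scales it by $c$. Writing any prescribed pair as $(n',k')=(n+a d_p,\,k+b d_p)$ with $a=(n'-n)/d_p$ and $b=(k'-k)/d_p$ integers, and realizing it by $a$ first-index moves and $b$ second-index moves (run backward where needed), I obtain
$$H_{n',k'}\equiv c^{\,a}\,H_{n,k}\pmod p.$$
Because $c^{a}$ is a unit, the right-hand side is $\equiv 0$ exactly when $H_{n,k}\equiv 0$, which is the hypothesis; this proves the claim for every positive-integer pair $(n',k')$ with $n'\equiv n$ and $k'\equiv k\pmod{d_p}$.

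The step I expect to be the crux is organizing this two-dimensional propagation. Lemma \ref{LemmaStarPorperty} only supplies a first-index shift and a diagonal shift, so extracting a clean second-index shift forces the composition above, and covering the full residue class (in particular pairs with $n'<n$ or $k'<k$) forces the shifts to be run backward. Both manoeuvres rest entirely on $c=F_{d_p+1}$ being a unit modulo $p$, so the short coprimality observation that $p\nmid F_{d_p+1}$ is the small but indispensable point that makes the argument close.
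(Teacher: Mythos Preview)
Your proof is correct and follows essentially the same approach as the paper: substitute $t=d_p$ into Lemma~\ref{LemmaStarPorperty}, use $F_{d_p}\equiv 0\pmod p$ to kill the second term, and then propagate along the two available directions (first-index shift and diagonal shift) to cover the whole residue class. Your version is in fact more careful than the paper's, since you explicitly isolate the unit $c=F_{d_p+1}$ and use its invertibility to justify running the shifts backward, whereas the paper simply asserts the conclusion ``for all integers $u$'' by induction and writes $H_{n',k'}\equiv H_{n'-ud_p,k'-ud_p}$ without tracking the factor of $c^u$ (which is harmless here only because the target is $\equiv 0$).
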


\begin{proof}
Items \eqref{LemmaStarPorpertyPart1} and \eqref{LemmaStarPorpertyPart3} of Lemma \ref{LemmaStarPorperty} for $t=d_p$ together with the fact that $F_{d_p}\equiv 0\pmod p$ imply that 
$$
H_{n+d_p,k+d_p} \equiv 0 \pmod p\quad {\text{\rm and}}\quad H_{n+d_p,k}\equiv 0\pmod p.
$$ 
By induction, $H_{n+u d_p, k+u d_p}\equiv 0\pmod p$ and $H_{n+u d_p, k}\equiv 0\pmod p$ hold for all integers $u$. Letting $k'=k+u d_p$, we get that $H_{n',k'}\equiv H_{n'-u d_p,k'-ud_p}\pmod p\equiv H_{n'-ud_p,k}\pmod p$.
Since $n\equiv n'\pmod {d_{p}}$, we get that $n'-ud_p=n+v d_p$ for some integer $v$. Hence, 
$$H_{n',k'}\equiv H_{n+v d_p,k}\equiv H_{n,k}\pmod p.$$
Completing the proof.  
\end{proof}

\begin{corollary} \label{PrimesMod12} Let $p$ be a prime number. 
\begin{enumerate}
\item \label{PrimesMod12Part4}  If $p \equiv 1  \bmod 12$ or if $p \equiv 7  \bmod 12$, then $H_{p, k}\equiv 0 \bmod 2$ for  $1\le k \le p$.
\item  \label{PrimesMod12Part1} If $p \equiv 5  \bmod 12$, then $H_{p, 4t+1} \equiv 0 \bmod 3$, for $t=0,1,2, \dots, \lfloor p/4\rfloor$.
\item  \label{PrimesMod12Part2} If $p \equiv 11 \bmod 12$, then $H_{p, 4t+2} \equiv 0 \bmod 3$, for $t=0,1,2, \dots, \lfloor p/4\rfloor$.

\end{enumerate} 
\end{corollary}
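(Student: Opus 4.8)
The plan is to split the three parts into the $\bmod 2$ statement, which is immediate, and the two $\bmod 3$ statements, which share a common template. For part \eqref{PrimesMod12Part4} I would simply observe that the residues coprime to $12$ are $1,5,7,11$, and of these exactly $1$ and $7$ reduce to $1\bmod 3$. Hence both hypotheses $p\equiv 1\pmod{12}$ and $p\equiv 7\pmod{12}$ force $p\equiv 1\pmod 3$, and Proposition \ref{CongruencesMod2} instantly gives $H_{p,k}\equiv 0\pmod 2$ for every $1\le k\le p$. No further argument is needed there. Parts \eqref{PrimesMod12Part1} and \eqref{PrimesMod12Part2} will each be handled by verifying a single base entry is divisible by $3$ and then spreading that divisibility across the row with the star property.

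The first step for the $\bmod 3$ parts is to identify the base entries. Using \eqref{definitionHrk1} with $k=1$ gives $H_{p,1}=F_0F_{p+1}+F_1F_{p-1}=F_{p-1}$, and with $k=2$ gives $H_{p,2}=F_1F_p+F_2F_{p-2}=F_p+F_{p-2}=L_{p-1}$. I then invoke the standard divisibility patterns modulo $3$: since the rank of apparition of $3$ in the Fibonacci sequence is $4$, we have $3\mid F_n$ if and only if $4\mid n$, while $3\mid L_n$ if and only if $n\equiv 2\pmod 4$. When $p\equiv 5\pmod{12}$ we have $p\equiv 1\pmod 4$, hence $p-1\equiv 0\pmod 4$ and so $3\mid F_{p-1}=H_{p,1}$; when $p\equiv 11\pmod{12}$ we have $p\equiv 3\pmod 4$, hence $p-1\equiv 2\pmod 4$ and so $3\mid L_{p-1}=H_{p,2}$. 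These settle the cases $t=0$.

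The second step propagates these using the star property (Proposition \ref{StarPorperty1}) with the odd prime taken to be $3$. Here $d_3=3-\legendre{5}{3}=3-(-1)=4$, since $3\equiv -2\pmod 5$. Keeping the row index fixed at $n'=p$ and varying the column as $k'=4t+1$ (respectively $k'=4t+2$), the required congruences $n'\equiv p\pmod 4$ and $k'\equiv 1\pmod 4$ (respectively $k'\equiv 2\pmod 4$) hold automatically, so the star property carries $H_{p,1}\equiv 0\pmod 3$ to $H_{p,4t+1}\equiv 0\pmod 3$ and $H_{p,2}\equiv 0\pmod 3$ to $H_{p,4t+2}\equiv 0\pmod 3$ for all admissible $t$. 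The index range $t=0,1,\dots,\lfloor p/4\rfloor$ is exactly what keeps $1\le k'\le p$: writing $p=4s+c$ with $c\in\{1,3\}$ one checks $\lfloor p/4\rfloor=s$, and the largest column reached is $4s+1=p$ or $4s+2=p-1$, both valid.

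The conceptual crux, and the only place demanding care, is matching the step size in the column index to the modulus: the statement uses steps of $4$ in $k$ precisely because $d_3=4$. The only mild obstacle is the clash of notation between the $p$ of the corollary (a row index) and the $p$ of the star property (the modulus), which here must be set equal to $3$; once that substitution is made and the two base-case divisibilities are checked, nothing else is required. I would close with the remark that $p\equiv 1,7\pmod{12}$, $p\equiv 5\pmod{12}$, and $p\equiv 11\pmod{12}$ exhaust the residues of primes $p>3$ modulo $12$, so the three parts together guarantee a small factor ($2$ or $3$) in a positive-density set of columns of row $p$ for \emph{every} such prime $p$.
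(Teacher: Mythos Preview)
Your proof is correct and follows essentially the same approach as the paper's: part~\eqref{PrimesMod12Part4} via $p\equiv 1\pmod 3$ and Proposition~\ref{CongruencesMod2}, parts~\eqref{PrimesMod12Part1} and~\eqref{PrimesMod12Part2} by checking the base entries $H_{p,1}=F_{p-1}$ and $H_{p,2}=L_{p-1}$ are divisible by $3$ and then applying the star property with $d_3=4$. You have simply supplied more of the routine verifications (the explicit computation of $d_3$, the Fibonacci/Lucas divisibility rules modulo $3$, and the index-range check) that the paper leaves to the reader.
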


\begin{proof} 
Since $p \equiv 1  \bmod 12$ or $p \equiv 7  \bmod 12$, in both cases $p \equiv 1  \bmod 3$. So, part 1 follows from Proposition \ref{CongruencesMod2}. 
Since $H_{p,1}=F_{p-1}\equiv 0 \bmod 3$ when $p \equiv 5  \bmod 12$ and $d_3=4$, part 2  follows from the Star property. 
Since $H_{p,2}=L_{p-1}\equiv 0 \bmod 3$ when $p \equiv 11  \bmod 12$ and again $d_3=4$,  part 3 follows from the Star property. 
\end{proof}

Proposition \ref{GeneralModePrime} proves that there are infinitely many composite numbers in ${\mathcal H}$. More about that later. 

\begin{proposition}\label{GeneralModePrime} Let $r, k$ be positive integers and $p$ be an odd  prime number and $d_p=p-\left(\frac{5}{p}\right)$ 
as before. 
Then $H_{r,k} \equiv 0 \bmod p$ if one of these four conditions holds:
\begin{itemize}
\item[(i)] $(r,k) \equiv (1,1)  \pmod {d_p}$; 
\item[(ii)] $(r,k) \equiv (-2,0)  \pmod {d_p}$;
\item[(iii)] $(r,k) \equiv (-2,-1) \pmod {d_p}$;   
\item[(iv)] $(r,k) \equiv (-5,-2) \pmod {d_p}$.
\end{itemize}
\end{proposition}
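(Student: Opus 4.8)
The plan is to derive all four cases from the Star property (Proposition \ref{StarPorperty1}) fed by the single arithmetic input $F_{d_p}\equiv 0\bmod p$ from Lemma \ref{Mod5FiboLucas}. Since the Star property propagates one congruence $H_{n,k}\equiv 0\bmod p$ to every pair $(r,k)$ with $r\equiv n$ and $k\equiv n \pmod{d_p}$, it is enough to produce, for each of the four listed residue classes, a \emph{single} representative $(n,k)$ with $1\le k\le n$ at which $H_{n,k}$ is manifestly a multiple of $F_{d_p}$. The whole argument then becomes: anchor, simplify, propagate.

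For case (i) the representative is $(n,k)=(1,1)$ itself: the initial condition $H_{1,1}=0$ gives $H_{1,1}\equiv 0\bmod p$ for every prime $p$, and the Star property immediately yields divisibility by $p$ throughout the class $(r,k)\equiv(1,1)\pmod{d_p}$. For the other three classes I would evaluate \eqref{definitionHrk1}, namely $H_{r,k}=F_{k-1}F_{r-k+2}+F_{k}F_{r-k}$, at representatives chosen so that the four Fibonacci indices appearing cluster around $d_p$; the Fibonacci recurrence $F_m=F_{m-1}+F_{m-2}$ then collapses the expression to an explicit multiple of $F_{d_p}$:
\begin{itemize}
\item[(ii)] At $(n,k)=(2d_p-2,\,d_p)$ one gets $H=F_{d_p-1}F_{d_p}+F_{d_p}F_{d_p-2}=F_{d_p}\bigl(F_{d_p-1}+F_{d_p-2}\bigr)=F_{d_p}^2$.
\item[(iii)] At $(n,k)=(2d_p-2,\,d_p-1)$ one gets $H=F_{d_p-2}F_{d_p+1}+F_{d_p-1}^2=\bigl(F_{d_p}^2-F_{d_p-1}^2\bigr)+F_{d_p-1}^2=F_{d_p}^2$, using $F_{d_p-2}F_{d_p+1}=(F_{d_p}-F_{d_p-1})(F_{d_p}+F_{d_p-1})$.
\item[(iv)] At $(n,k)=(2d_p-5,\,d_p-2)$ one gets $H=F_{d_p-3}F_{d_p-1}+F_{d_p-2}F_{d_p-3}=F_{d_p-3}\bigl(F_{d_p-1}+F_{d_p-2}\bigr)=F_{d_p-3}F_{d_p}$.
\end{itemize}
In every case the value is an integer multiple of $F_{d_p}$, hence $\equiv 0\bmod p$ by Lemma \ref{Mod5FiboLucas}, and one last application of the Star property completes that class.

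The only points needing care are bookkeeping ones, so I expect no genuine obstacle. First, the representatives must be legitimate triangle positions, i.e. $1\le k\le n$; since every odd prime satisfies $d_p=p-\left(\frac{5}{p}\right)\ge 4$, all four inequalities hold (the tightest being (iv), which merely needs $d_p\ge 3$). Second, I would stress that the three collapses above use \emph{only} the Fibonacci recurrence and the difference-of-squares identity, so they remain valid for all odd $p$ including $p=5$; in particular I deliberately avoid the sharper congruence $F_{d_p-1}\equiv\left(\frac{5}{p}\right)\bmod p$, which fails at $p=5$. The single conceptual step is recognizing that each forbidden class can be anchored at a point where two of the relevant indices are $d_p-1$ and $d_p-2$ (or where a factor $F_0=0$ appears), forcing the recurrence to manufacture the factor $F_{d_p}$; once that choice is made, the remaining verifications are routine and the Star property does all the propagation.
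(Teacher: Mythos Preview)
Your proof is correct. Both your argument and the paper's hinge on the Star property, but they anchor differently. The paper simply notes that $H_{1,1}=H_{-2,0}=H_{-2,-1}=H_{-5,-2}=0$ (evaluating \eqref{definitionHrk1} at the literal representatives, which requires the negative-index extension of $F_n$ already introduced in Section~\ref{congurencesHrk}); since each value is zero it is trivially divisible by $p$, and the Star property propagates. You instead translate each residue class to a positive-index representative and show the value there is an explicit multiple of $F_{d_p}$, then invoke Lemma~\ref{Mod5FiboLucas}. Your route avoids any worry about whether Proposition~\ref{StarPorperty1} applies outside the range $1\le k\le n$ in which it is stated, and makes the mechanism (the appearance of a factor $F_{d_p}$) more visible; the paper's route is shorter but leans on the reader being comfortable with the extension of $H_{r,k}$ to all integer indices. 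Incidentally, your cases (ii) and (iii) are the two symmetric points of Proposition~\ref{MidColomns} at $t=d_p-1$, which explains why both collapse to $F_{d_p}^2$.
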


\begin{proof}
This is immediate since $H_{1,1}=H_{-2,0}=H_{-2,-1}=H_{-5,-2}$. 
\end{proof}

\section{Composites in ${\mathcal H}$}

In this section we prove that there are arbitrarily  large neighborhoods of ${\mathcal H}$ where all entries are composite.  
 
\begin{theorem}
For every $m\ge 1$, there are infinitely many  pairs $(n,k)$ such that $H_{n+i,k+j}$ is composite for all $i,j\in \{1,\ldots,m\}$. 
\end{theorem}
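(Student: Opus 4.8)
The plan is to realize the desired $m\times m$ block of composite entries via a two-dimensional Chinese Remainder Theorem argument, the natural analogue of the one-dimensional fact quoted above. Concretely, I would attach to each of the $m^2$ positions $(i,j)$ with $1\le i,j\le m$ a distinct odd prime $p_{i,j}$ and then produce $(n,k)$ forcing $p_{i,j}\mid H_{n+i,k+j}$; once $n,k$ are large enough every such entry exceeds its prime divisor and is therefore composite. The engine for the divisibility is Proposition~\ref{GeneralModePrime} together with the Star property (Proposition~\ref{StarPorperty1}): if $(n+i,k+j)$ lies in one of the four residue classes $(1,1),(-2,0),(-2,-1),(-5,-2)$ modulo $d_{p_{i,j}}$, then $H_{n+i,k+j}\equiv 0\pmod{p_{i,j}}$.

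The first thing to notice is an obstruction: since $p$ is odd, $d_p=p-\left(\frac5p\right)$ is always even, so the $m^2$ congruences I want to impose on $n$ all share the modulus-$2$ factor and cannot be made consistent naively. For instance, forcing $n+i\equiv 1\pmod{d_{p_{i,j}}}$ for every $i$ would demand $n\equiv 1-i\pmod 2$ simultaneously for even and odd $i$, which is impossible. The key observation that removes this obstruction is that the four base classes reduce modulo $2$ to the four distinct parity pairs $(1,1),(0,0),(0,1),(1,0)$. Hence I would fix $n$ and $k$ both even and, for each position $(i,j)$, select the unique one of the four classes whose parity matches $(i\bmod 2,\,j\bmod 2)$; by construction every resulting congruence on $n$ reduces modulo $2$ to ``$n$ even'' and every congruence on $k$ reduces to ``$k$ even'', so the mod-$2$ parts are now mutually consistent.

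It remains to make the odd parts of the moduli cooperate, and this is where the main technical work lies. I would choose the primes $p_{i,j}$ one at a time by Dirichlet's theorem so that each $p_{i,j}\equiv 1\pmod 5$ (whence $d_{p_{i,j}}=p_{i,j}-1$) and $p_{i,j}\equiv 3\pmod 4$ (whence $d_{p_{i,j}}=2o_{i,j}$ with $o_{i,j}$ odd, so that the $2$-adic valuation is uniformly equal to $1$), and, when choosing the next prime, additionally impose $p_{i,j}\equiv 2\pmod q$ for every odd prime $q$ dividing a previously chosen $o_{i',j'}$; this guarantees $q\nmid p_{i,j}-1$ and hence makes the odd parts $o_{i,j}$ pairwise coprime. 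These congruence conditions are compatible and lie in residue classes coprime to their moduli, so Dirichlet supplies infinitely many admissible primes at each step. With the $o_{i,j}$ pairwise coprime and all $2$-adic valuations equal to $1$, the $n$-system reduces to ``$n$ even'' together with pairwise-coprime conditions modulo the $o_{i,j}$, and likewise for $k$; the Chinese Remainder Theorem then solves each system, the solution sets being full arithmetic progressions modulo $2\prod_{i,j}o_{i,j}$.

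Finally I would assemble the conclusion. Fix one admissible value of $k$ from its progression and let $n$ run through its arithmetic progression with $n$ large; Proposition~\ref{GeneralModePrime} then gives $p_{i,j}\mid H_{n+i,k+j}$ for every $i,j$. For such $n$ each column index $k+j$ is fixed while each row index $n+i$ tends to infinity, so by the Fibonacci growth in the row variable recorded in \eqref{HoyaAsGenFibo} we have $H_{n+i,k+j}>p_{i,j}$ for all large $n$, and each pair $(n+i,k+j)$ is a genuine position of $\mathcal H$ because $k+j\le k+m<n+i$. Hence every entry of the block is a positive multiple of $p_{i,j}$ strictly exceeding $p_{i,j}$, and is therefore composite; letting $n$ vary yields infinitely many pairs $(n,k)$. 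The principal obstacle throughout is the evenness of every $d_p$: the four-parity-class trick is what reconciles the mod-$2$ congruences, while the Dirichlet choice of primes with uniform $2$-adic valuation and pairwise-coprime odd parts is what lets the remaining Chinese Remainder Theorem step go through.
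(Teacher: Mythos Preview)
Your overall strategy is sound, but there is a concrete technical slip in the Dirichlet step. You require every $p_{i,j}\equiv 1\pmod 5$, which forces $5\mid p_{i,j}-1=d_{p_{i,j}}$ and hence $5\mid o_{i,j}$ for \emph{every} $(i,j)$. Thus after choosing the first prime, the list of odd primes $q$ dividing a previous $o_{i',j'}$ already contains $q=5$, and your coprimality condition then demands $p\equiv 2\pmod 5$ for the next prime, directly contradicting $p\equiv 1\pmod 5$. As written, the odd parts $o_{i,j}$ all share the factor $5$ and are not pairwise coprime, so the final CRT step fails. The fix is easy---take instead $p_{i,j}\equiv 4\pmod 5$ (still $\left(\frac{5}{p}\right)=1$ and $d_p=p-1$, but now $5\nmid d_p$), after which your argument goes through---but the proof needs this correction.

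It is worth noting that the paper sidesteps this entire layer of difficulty by using a different divisibility criterion. Rather than Proposition~\ref{GeneralModePrime} and the Star property (whose moduli $d_p$ are always even and force the parity trick and the careful Dirichlet selection you describe), the paper uses Proposition~\ref{LinesWithCoposite}, namely $F_{\gcd(r+2,k)}\mid H_{r,k}$. One simply takes the first $m^2$ odd primes $P_{i,j}$, imposes $n\equiv -(i+2)\pmod{P_{i,j}}$ and $k\equiv -j\pmod{P_{i,j}}$ by CRT (the moduli being the primes themselves, hence automatically pairwise coprime), and concludes $F_{P_{i,j}}\mid H_{n+i,k+j}$. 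No parity obstruction arises and no Dirichlet argument is needed. Your route, once patched, does work and illustrates that the Star property alone suffices; the paper's argument is considerably shorter.
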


\begin{proof}
Let $p_k$ be the $k$th prime. Fix $m$ and let $3,5,\ldots,p_{m^2+1}$ be the first $m^2$ odd primes and put them in an $m\times m$ array. Thus, we label such primes in some way as $P_{i,j}$ for $1\le i,j\le m$. Use the Chinese Remainder Lemma 
to construct infinitely many pairs of positive integers $(n,k)$ such that 
$$
n\equiv -(i+2)\pmod {P_{i,j}},\quad k\equiv -j\pmod {P_{i,j}}\quad {\text{\rm for~all}}\quad 1\le i,j\le m.
$$
Then 
$$
H_{n+i,k+j}\equiv 0\pmod {F_{\gcd(n+i+2,k+j)}}\equiv 0\pmod {F_{P_{i,j}}}.
$$
Since $F_{P_{i,j}}>1$ because $P_{i,j}$ is an odd prime and $F_{P_{i,j}}\mid H_{n+i,k+j}$ for all $1\le i,j\le m$ it follows that for large $n$ and $k$, all numbers $H_{n+i,k+j}$ for $1\le i,j\le m$, are composite. The theorem is proved.  
\end{proof}

\section{Discussions, questions and tables} 

\subsection{Tables from the Star property} The Table \ref{TablaCongurences} was constructed using the Star property described in Proposition \ref{StarPorperty1}.   
For example, the first multiple  of $3$  in Table \ref{TablaCongurences} is $H_{3,2}=3$. So, from the Star property we know that there will be a multiple of $3$ located a  
distance $4$ from   $H_{3,2}$ in all directions. Since this holds for every entry of the table that satisfies this condition, inductively, we summarize 
 it in this form: every  point $H_{r,k} \in \mathcal{H}$ satisfying that $r \equiv 3 \bmod 4$ and $k \equiv 2 \bmod 4$ (for short, we write  
 $(r,k) \equiv (3,2) \bmod 4$),  is divisible by $3$. The second multiple of $3$ in Table \ref{tabla2} is $H_{5,1}=3$ (and the symmetric point $H_{5,5}$).   
 In this case,  the condition is that every point   $H_{r,k} \in \mathcal{H}$ that satisfies $(r,k) \equiv (1,1) \bmod 4$ is a multiple of $3$.  
 Note that $H_{9,5}=39$  satisfies the condition previously  discussed. By the Star property we have that every entry $H_{r,k} \in \mathcal{H}$ at distance $4$  
 (in all directions) from $H_{9,5}$  is a multiple of $3$. Thus,  
 $H_{5,1}=3$, $H_{5,5}=3$, $H_{9,1}=21$, $H_{9,9}=21$, $H_{13,5}=270$ and $H_{13,9}=270$. Similarly, we find that 
 every point $H_{r,k} \in \mathcal{H}$ that satisfies $(r,k) \equiv (2,0) \bmod 4$ is a multiple of $3$ and that every point $H_{r,k} \in \mathcal{H}$  
 that satisfies $(r,k) \equiv (2,3) \bmod 4$ is a multiple of $3$. Formally, the point $H_{r,k}\equiv  0 \bmod 3$ if one of these hold: 
 $$(r,k) \equiv (3,2) \bmod 4, \;\;  (r,k) \equiv (1,1) \bmod 4; \;\;   (r,k) \equiv (2,3) \bmod 4; \; \text{ or } \;  (r,k) \equiv (2,0) \bmod 4.$$
 This is summarized in Table \ref{TablaCongurences} line 1. 
 
 Again we can prove the results in the following discussion using mathematical induction. But for simplicity of the discussion, we leave the formality  
 for the curious reader.  From Table \ref{tabla2}, we can see that the first multiples of $5$ are these points: 
 $$H_{1,1} \text{ (trivial), } \; H_{5,3}, \;H_{6,1}, \; H_{6,5} \text{ (symmetric to $H_{6,1}$) }, \; H_{8,4}, \; \text{ and } \; H_{8,5},  \text{ where } 1\le k \le n \le 8.$$
 We can consider these points as the basic inductive step. Using the Star property,  the former discussion generalizes to this---see Proposition \ref{StarPorperty1}.
 The point $H_{r,k}\equiv  0 \bmod 5$ if one of these hold: 
 $$(r,k) \equiv (0,3) \bmod 5, \; (r,k) \equiv (1,1) \bmod 5, \;  (r,k) \equiv (3,0) \bmod 5, \; \text{ or } \; (r,k) \equiv (3,4) \bmod 5.$$ 
This is summarized in Table \ref{TablaCongurences} line 2.

Similarly, using the Star property described in Proposition \ref{StarPorperty1}, we construct a few more lines of Table \ref{TablaCongurences}. Using Proposition  \ref{StarPorperty1},
in conjunction with  Propositions  \ref{LinesWithCoposite}, \ref{CongruencesMod2}, \ref{MidColomns}, and \ref{GeneralModePrime},  
and Corollary \ref{PrimesMod12} helps us find points in the triangle that are composite.  A small sieve for the prime numbers in the triangle 
is given in Figure \ref{Dis_primes20lines}. However, this is not enough to determine whether the number of primes in the triangle is finite or infinite.  

\begin{table} [!h] \small
	\centering
	\addtolength{\tabcolsep}{-3pt} \scalebox{1}{
\begin{tabular}{|c|c||l|l|l|l|l|}
  \hline 
$p$ &   $H_{r,k}  \equiv 0 \bmod p$ & $(r,k) \bmod p\text{ or } p\pm 1$ & $(r,k) \bmod p\text{ or } p\pm 1$ & $(r,k) \bmod p\text{ or } p\pm 1$ \\ \hline \hline
$3$  & $H_{r,k}\equiv 0 \bmod 3$  & $(r,k) \equiv (3,2) \bmod 4$   & $(r,k) \equiv (1,1) \bmod 4$  &  $(r,k) \equiv (2,3) \bmod 4$  \\
        &                                              & $(h,k) \equiv  (2,0) \bmod 4$  &                                              &                                                \\   \hline
$5$  & $H_{r,k}\equiv 0 \bmod 5$  & $(r,k) \equiv (0,3) \bmod 5$   & $(r,k) \equiv (1,1) \bmod 5$  &  $(r,k) \equiv (3,4) \bmod 5$  \\
        &					      & $(r,k) \equiv (3,0) \bmod 5$   && \\  \hline
$7$  & $H_{r,k}\equiv 0 \bmod 7$  & $(r,k) \equiv (5,2) \bmod 8$   & $(r,k) \equiv (5,4) \bmod 8$  &  $(r,k) \equiv (7,3) \bmod 8$  \\  
	&					      & $(r,k) \equiv (7,5) \bmod 8$  &  $(r,k) \equiv (1,1) \bmod 8$  & $(r,k) \equiv (3,6) \bmod 8$  \\
	&  					      &$(r,k) \equiv (6,7) \bmod 8$   &  $(r,k) \equiv (6,0) \bmod 8$ &\\  \hline
$11$& $H_{r,k}\equiv 0 \bmod 11$& $(r,k) \equiv (6,2) \bmod 10$ & $(r,k) \equiv (6,5) \bmod 10$&  $(r,k) \equiv (0,4) \bmod 10$\\
        &					     &  $(r,k) \equiv (0,7) \bmod 10$ & $(r,k) \equiv (1,1) \bmod 10$ & $(r,k) \equiv (5,8) \bmod 10$\\
        	&						&  $(r,k) \equiv (8,9) \bmod 10$&  $(r,k) \equiv (8,0) \bmod 10$ &\\  \hline
$13$&$H_{r,k}\equiv 0 \bmod 13$ & $(r,k) \equiv (8,1) \bmod 14$ & $(r,k) \equiv (8,8) \bmod 14$&  $(r,k) \equiv (9,5) \bmod 14$\\  
	&						&$(r,k) \equiv (12,6) \bmod 14$ & $(r,k) \equiv (12,7) \bmod 14$& $(r,k) \equiv (1,1) \bmod 14$\\
	&  						&$(r,k) \equiv (1,8) \bmod 14$&  $(r,k) \equiv (2,5) \bmod 14$  & $(r,k) \equiv (2,12) \bmod 14$\\ 
	&						&$(r,k) \equiv (5,6) \bmod 14$&  $(r,k) \equiv (5,7) \bmod 14$&  $(r,k) \equiv (5,13) \bmod 14$ \\ 
        &                                                &$(r,k) \equiv (5,0) \bmod 14 $&$(r,k) \equiv (9,12) \bmod 14$ & $(r,k) \equiv (12,13) \bmod 14$\\
        &						&  $(r,k) \equiv (12, 0) \bmod 14$ &&\\  \hline           
$17$&$H_{r,k}  \equiv 0 \bmod 17$ & $(r,k) \equiv  (1, 1) \bmod 18 $ &$(r,k) \equiv  (10,1 ) \bmod 18 $ &$(r,k) \equiv  (10, 10) \bmod 18 $  \\  
	& 					        &$(r,k) \equiv  (13, 7) \bmod 18 $ &$(r,k) \equiv  (16,8 ) \bmod 18 $ &$(r,k) \equiv  (16,9) \bmod 18 $ \\
	& 						&$(r,k) \equiv  (1,10) \bmod 18 $ &$(r,k) \equiv  (4,7) \bmod 18 $ &$(r,k) \equiv  (4,16 ) \bmod 18 $ \\
	& 						&$(r,k) \equiv  (7,8 ) \bmod 18 $ &$(r,k) \equiv  (7, 9) \bmod 18 $ &$(r,k) \equiv  (7, 17) \bmod 18 $ \\
	& 						&$(r,k) \equiv  (7,0 ) \bmod 18 $ &$(r,k) \equiv  (13,16 ) \bmod 18 $ & $(r,k) \equiv  (16,0) \bmod 18 $\\
	& 						&$(r,k) \equiv  (16,17) \bmod 18 $ & & \\ \hline   
$19$	&$H_{r,k}  \equiv 0 \bmod 19$ &$(r,k) \equiv  (1, 1) \bmod 18 $  &$(r,k) \equiv  (10, 2) \bmod 18$ &$(r,k) \equiv  (10, 9) \bmod 18$  \\  
	& 						&$(r,k) \equiv  (14, 5) \bmod 18$ &$(r,k) \equiv  (14, 10) \bmod 18$ &$(r,k) \equiv  (15, 3) \bmod 18$ \\
	& 						& $(r,k) \equiv  (15, 13) \bmod 18$&$(r,k) \equiv  (16, 17) \bmod 18$ &$(r,k) \equiv  (16, 0) \bmod 18 $ \\
	& 						&$(r,k) \equiv  (0, 7) \bmod 18 $ &$(r,k) \equiv  (0, 12) \bmod 18 $ & \\
	& 						&$(r,k) \equiv  (4, 8) \bmod 18 $ &$(r,k) \equiv  (4, 15) \bmod 18 $ &$(r,k) \equiv  (13, 16) \bmod 18$ \\
	& 						& $(r,k) \equiv  (17, 4) \bmod 18$&$(r,k) \equiv  (17, 14) \bmod 18$ & \\ \hline   
\end{tabular}}
\smallskip
\caption{Distribution of Primes in normal Hosoya triangle.} \label{TablaCongurences}
\end{table}

This table gives rise to the Figure \ref{Dis_primes20lines}. It shows the distribution of the primes in the partial triangle $\Delta_{20}$.

\begin{figure}[h!]
  \includegraphics[width=10cm]{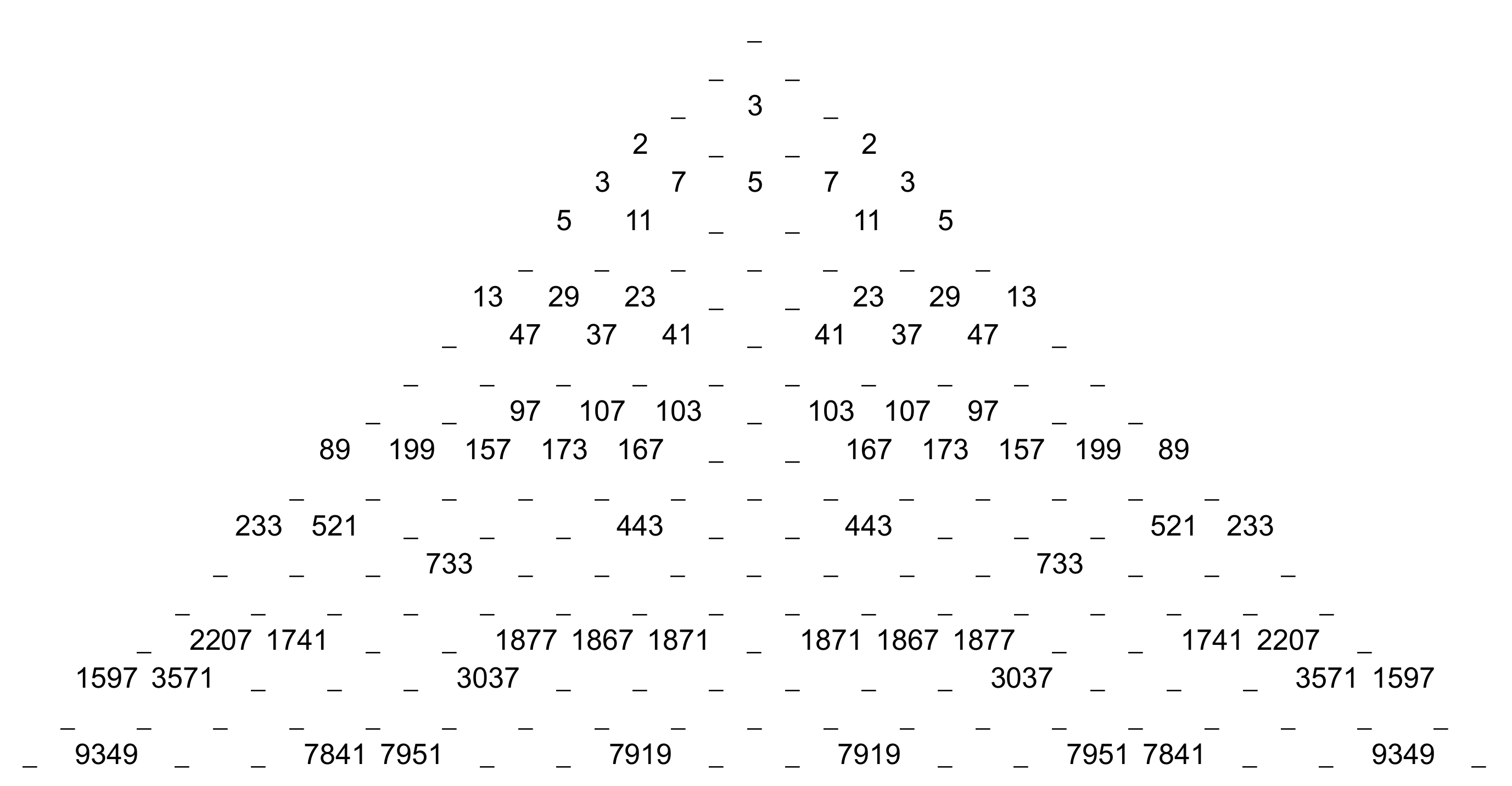}
  \caption{Primes in the first 20 lines in the triangle}\label{Dis_primes20lines}
\end{figure}

\subsection{Lines free of primes}

We now focus our discussion on lines of columns free of primes.  For example, Proposition \ref{MidColomns} tells us that the three central 
columns of the triangle are formed by non-prime numbers, except $3$ and $5$. This shows that there are no lines free of non-prime numbers.  
Proposition \ref{CongruencesMod2} shows that lines in position $r=3t+1$ are free of primes, for $t>1$.   Table \ref{TablaCongurences}  tells us that 
the points in position $k=4s+1$ of every line  in position $r=4t+1$ are divisible by 3.  Figure \ref{Dis_primes20lines} depicts the triangle with 
some lines free of primes. So, we have this question:  are there infinitely many lines  free of primes, other than the lines described in 
Proposition \ref{LinesWithCoposite}? For example, line 50 is the first of this type of lines. Thus, $H_{50,k}$ with $1\le k \le 25$ is free of primes. So, the points in line $50$ are 

\begin{table} [!h] \small
	\centering
\addtolength{\tabcolsep}{-3pt} \scalebox{1}{
\begin{tabular}{llll}  
$H_{50, 1} =  7778742049 $, &\quad $H_{50, 2} =  17393796001 $, &\quad $H_{50, 3} =  13721172195 $, &\quad $H_{50, 4} =  15123989661 $, \\
$H_{50, 5} =  14588161069 $, &\quad $H_{50, 6} =  14792829379 $, &\quad $H_{50, 7} =  14714653041 $, &\quad $H_{50, 8} =  14744513745 $, \\
$H_{50, 9} =  14733107971 $, &\quad $H_{50, 10} =  14737464589 $, &\quad $H_{50, 11} =  14735800509 $, &\quad $H_{50, 12} =  14736436131 $\\
$H_{50, 13} =  14736193345 $, &\quad $H_{50, 14} =  14736286081 $, &\quad $H_{50, 15} =  14736250659 $, &\quad $H_{50, 16} =  14736264189 $,\\
 $H_{50, 17} =  14736259021 $, &\quad $H_{50, 18} =  14736260995 $,& \quad$H_{50, 19} =  14736260241 $, &\quad $H_{50, 20} =  14736260529 $, \\
 $H_{50, 21} =  14736260419 $, &\quad $H_{50, 22} =  14736260461 $, &\quad $H_{50, 23} =  14736260445 $, &\quad $H_{50, 24} =  14736260451 $, \\
$H_{50, 25} =  14736260449 $.
\end{tabular}}
\smallskip
\end{table}

From Proposition \ref{LinesWithCoposite} we know that $H_{50, i}$ is composite, for  $i = 3$, $4$, $7$, $8$, $11$, $12$, $13$, $15$, $16$, $19$, 
$20$, $23$, $24$, $25$. Alternatively, using Table \ref{TablaCongurences} line 1 we can see that $H_{50, 3}$, $H_{50, 4}$, $H_{50, 7}$, $H_{50, 8}$, 
$H_{50, 11}$, $H_{50, 12}$, $H_{50, 15}$, $H_{50, 16}$, $H_{50, 19}$, $H_{50, 20}$, $H_{50, 23}$, and $H_{50, 24}$ are divisible by $3$. 
From line 2 we can see that $H_{50, 13}$, and $H_{50, 18}$ are divisible by $5$.  From line 4 we can see that  $H_{50, 14}$, and $H_{50, 17}$ are  
divisible by $11$. From line 5 we can see that $H_{50, 1}$  and $H_{50, 22}$ are divisible by $13$. From line 7 we can see that $H_{50, 5}$ 
and $H_{50, 10}$ are divisible by $19$.  From Proposition \ref{MidColomns} we know that $F_{13}=233$ divides $H_{50, 25}$.  Since  
$H_{8,2}=L_{7}=29$ we have that $29$ divides $H_{22,2}=H_{22,20}=L_{21}$ (since $L_{7} \mid L_{21}$). So, using the Star property we have that   
$29$ divides both $H_{50,2}$ and $H_{50,21}$. Extending Table \ref{tabla2} to more lines  we obtain that  $109 | H[50, 6]$, and  $89 | H[50, 9]$.  

We now give a few other lines that are free of primes. Lines $71$,\; $75$,\; $78$,\; $86$,\; $110$,\; $119$,\; $153$,\; $159$, \; $207$,\; $213$,\; $245$,\; $260$,\; $263$,\; $282$,\; $300$,\; $326$,\; $329$,\; $341$,\; $351$,\; $362$,\; $374$,\; $423$,\; $438$,\; $495$,\; $519$,\; $521$,\; $522$,\; $530$, 
$539$,\; $554$,\; $558$,\; $587$,\; $591$,\; $596$,\; $605$,\; $710$,\; $716$,\; $735$,\; $749$,\; $758$,\; $768$,\; $774$,\; $786$,\; $791$,\; $806$,\; $807$,\; $843$,\; $849$,  $866$,\; $869$,\; $900$,\; $903$,\; $911$,\; $918$,\; $926$,\; $930$,\; $950$,\; $960$,\; $965$,\; $966$,\; $975$,\; $986$.

If $(p,q)$ is a pair of twin primes with $p<q$, then  by Proposition \ref{CongruencesMod2} $H_{q,k}$  is free of primes for every $k$ 
(since $q$ is of the form $3t+1$).  We observe that the lines in position $p$, a prime number, have low density of prime numbers. 
For example, when $p$ is a non-twin prime we have that $p+2$ is a composite number.  So, there is a high probability that these facts hold:  
$$\gcd(p+2,k)>2\quad \text{ and } \quad \gcd(p+2,k+1)>2 \quad \text{ for } \quad1\le k \le p.$$ 
These two facts,  Propositions \ref{LinesWithCoposite} and \ref{MidColomns}, and 
Corollary \ref{PrimesMod12} imply that the number of composite numbers in line $p$ may be high. We know that $F_{p-1}$ and $L_{p-1}$ are  
composite numbers, when $p$ is a prime number. Therefore, the number of divisors $F_{p-1}$ and $L_{p-1}$ increase ($p-1$ is composite) the number of composite points in the line 
containing the point $H_{p,k}$.  This is due to the Star property and that both $F_{p-1}$ and $L_{p-1}$ are in the same line of $H_{p,k}$.

Note that using Mathematica we can verify that for $1< r\le 13461$, the lines described in Table \ref{tabla3HrkComposite} are free of primes. 
In Table \ref{tabla3HrkComposite} we did not include the case given by Propositions  \ref{LinesWithCoposite}  and \ref{CongruencesMod2} 
and we did not  include prime numbers; we only provide  composite numbers. 

\begin{table} [!h] \small
	\centering
\addtolength{\tabcolsep}{-3pt} \scalebox{1}{
\begin{tabular}{|l|r|r|r|r|r|r|r|r|r|r|r|r|r|}  \hline
Line $r$ & 329    & 351    & 519     & 539   & 591     & 605    & 749   & 807  & 965      & 975     & 1247  & 1655 &1695 \\ \hline
Line $r$ & 1821  & 2135  & 2219   & 2279  & 2375  & 2391  & 2685  & 3065 & 3269   & 3465  & 3657  & 3759 &3831 \\ \hline
Line $r$ & 4089  & 4151  & 4215   & 4269  & 4601  & 4641  & 4887  & 4941 & 5111   & 5145  & 5331  & 5415 &6005 \\ \hline
Line $r$ & 6071  & 6099  & 7077   & 7367  & 7619  & 8007  & 8309  & 8361 & 8745   & 8751  & 9411  & 9809 &9971 \\ \hline
Line $r$ & 10167 &  11157 & 11175 & 11285 & 11469 & 11481& 11487  & 11585 & 11591 & 11615 &11805& 11837 &  12035 \\ \hline
Line $r$ & 12071  & 12375 & 12471 & 12575 & 12797 &12981 & 13005  & 13047 & 13325 & 13461&&& \\ \hline
\end{tabular}}
\smallskip
\caption{Lines $r$ of $\mathcal{H}$ free of primes, where $r$ is a composite number.} \label{tabla3HrkComposite}
\end{table}

\subsection{Are there infinitely many primes of the form $H_{r,k}$ }
We believe that in the determinant Hosoya triangle there are infinitely many prime numbers of the form 
$H_{r,k}=F_{k-1}F_{r-k+2}+F_{k}F_{r-k}$. If $k=1$, then it 
gives the classic conjecture for  Fibonacci numbers. If $k=2$, then it gives the classic conjecture for Lucas numbers. 

In this section we construct a small sieve that shows that there is a large amount of primes in the triangle.

The following are some of the few primes of the form $H_{r,k}$ with $r\le 50$. \; 
$3$,\  $2$,\  $7$,\  $5$,\  $11$,\  $13$,\  $29$,\  $23$,\  $47$,\  $37$,\  $41$,\  $97$,\  $107$,\  $103$,\  $89$,\  $199$,\  $157$,\  $173$,\  $167$,\  $233$,\  $521$,\  $443$,\  $733$,\  $2207$,\   $1741$,\  $1877$,\  $1867$,\  $1871$,\  $1597$,\  $3571$,\  $3037$,\  $9349$,\  $7841$,\  $7951$,\  $7919$,\  $11933$,\  $12823$,\  $33503$,\  $28657$,\  $ 50549$,\  $55717$,\  $54497$,\  $54319$,\  $54277$,\  $54293$,\  $54287$,\  $142099$,\  $214129$,\  $236021$,\  $229963$,\  $560597$,\  $601187$,\  $514229$,\   $974249$,\  $3010349$,\  $2617513$,\  $2546669$,\  $2549863$,\  $2550329$,\  $2550407$,\  $4128959$,\  $4126697$,\  $10695127$,\  $10803367$,\  $16276621$,\  $17477021$,\  $17482189$,\  $17480681$,\  $17480791$,\  $54018521$,\  $45940907$,\  $45765017$,\  $45765227$,  $75998029$,  
$74091163$,\;  $74050573$,\; $74049641$,\;  $74049683$,\;  $180510493$,\;  $198965423$,\;  $193866917$, 
$193864477$,\  $193864603$,\  $370248451$, \ $314883661$,\  $313678093$,\  $433494437$,\ $820019509$,\  
$821683589$,\;\;  $821047967$,\;\;   $821290753$,\;\;   $821223569$, \;\;  $1328726303$,\;\;   $1328767757$,\;\;   $3478800673$,\;\;   $3478759199$,  
$2971215073$,\;  $6643838879$,\;  $5620497329$,\;  $5628750833.$

With the congruences given in Section \ref{congurencesHrk} we can determine some points in the triangle that are composite. For example, 
we analyze the points in the line 8 in Table 2 for $1\le k \le 4$. The distinct points are $H_{8,1}=13$, $H_{8,2}=29$, $H_{8,3}=23$, and $H_{8,4}=25$.  
From Proposition \ref{CongruencesMod2} we know that none of these entries are even. From the Star property or Table 
\ref{TablaCongurences} we know that  none of these points are divisible by 3, and finally from Proposition \ref{MidColomns} and 
Table \ref{TablaCongurences} line 2  we have that the only points that are divisible by five are (is)  $H_{8,4}=H_{8,5}$. Therefore, we can conclude 
that the remaining points in line 8 are prime. 

The pair $(r,k)$ in Table \ref{tablaHrkComposite} gives the coordinates of the point $H_{r,k}$ that is prime. For example, when we write $(5,3)$, 
in the above-mentioned table, we mean that the point $H_{5,3}$ is a prime number (in this case $H_{5,3}=5$). The reader can evaluate  $H_{r,k}$, 
using any of the formulas 
given in equations  \eqref{eq:maineq}, \eqref{definitionHrk1}, or \eqref{definitionHrk2}.

In Table \ref{tablaHrkComposite} we do not include the known numbers for Fibonacci and Lucas sequences.
So, the table shows only the prime numbers located after entry $2$ in the triangle.  In the appendix  there are 43 tables 
(similar to Table \ref{tablaHrkComposite}) with coordinates of primes  of the form $H_{r,k}$ for $r=1, \dots, 6000$. 
  
 The  Table \ref{tablaHrkComposite} gives a list of primes of the form $H(r,k)$, for $r=1, \dots, 200$ and $0\le k \le \lceil r/2 \rceil$. 

\begin{table} [!h] \small
	\centering
\addtolength{\tabcolsep}{-3pt} \scalebox{1}{
\begin{tabular}{|c|c|c|c|c|c|c|c|c|c|c|c|c|}  \hline
 (r, k)    & (r, k)    & (r, k)    & (r, k)    & (r, k)    & (r, k)    & (r, k)    & (r, k)    & (r, k)    &(r, k)     & (r, k)    \\ \hline \hline
 (5, 3)    & (8, 3)    & (9, 3)    & (9, 4)    & (11, 3)   & (11, 4)   & (11, 5)   & (12, 3)   & (12, 4)   & (12, 5)   & (14, 6)   \\ \hline
 (15, 4)   & (17, 3)   & (17, 6)   & (17,  7)  & (17, 8)   & (18, 6)   & (20, 5)   & (20, 6)   & (20, 9)   & (21, 3)   & (21, 8)   \\ \hline
 (23, 7)   & (24, 3)   & (24, 4)   & (24, 6)   & (24, 8)   & (24, 9)   & (24, 10)  & (24, 11)  & (26, 9)   & (27, 3)   & (27, 4)   \\ \hline
 (27, 11)  & (29, 3)   & (29, 7)   & (30, 10)  & (32, 4)   & (32, 7)   & (32, 9)   & (32, 11)  & (32, 15)  & (33, 8)   & (33, 12)  \\ \hline
 (35, 5)   & (35, 11)  & (36, 3)   & (36, 9)   & (36, 10)  & (36, 13)  & (36, 14)  & (38, 6)   & (38, 13)  & (38, 18)  & (39, 4)   \\ \hline
 (39, 8)   & (39, 12)  & (39, 15)  & (39, 17)  & (41, 3)   & (41, 4)   & (41, 12)  & (41, 15)  & (41, 19)  & (42, 6)   & (42, 13)  \\ \hline
 (44, 7)   & (44, 8)   & (44, 9)   & (44, 10)  & (44, 17)  & (45, 11)  & (45, 19)  & (47, 12)  & (47, 23)  & (48, 7)   & (48, 18)  \\ \hline
 (51, 4)   & (51, 9)   & (51, 15)  & (51, 17)  & (51, 20)  & (53, 7)   & (53, 8)   & (54, 17)  & (54, 22)  & (56, 8)   & (56, 10)  \\ \hline
 (56, 14)  & (57, 3)   & (57, 6)   & (57, 16)  & (57, 19)  & (59, 3)   & (59, 5)   & (59, 8)   & (59, 12)  & (59, 15)  & (59, 17)  \\ \hline
 (59, 20)  & (59, 25)  & (60, 21)  & (60, 22)  & (60, 25)  & (62, 6)   & (62, 9)   & (62, 17)  & (62, 26)  & (63, 16)  & (65, 10)  \\ \hline
 (65, 30)  & (66, 9)   & (66, 29)  & (66, 30)  & (68, 8)   & (68, 17)  & (69, 11)  & (72, 13)  & (72, 15)  & (72, 18)  & (74, 6)   \\ \hline
 (74, 10)  & (74, 14)  & (74, 17)  & (74, 33)  & (77, 15)  & (80, 19)  & (80, 22)  & (80, 36)  & (81, 3)   & (81, 12)  & (83, 27)  \\ \hline
 (83, 41)  & (84, 3)   & (84, 4)   & (84, 7)   & (84, 9)   & (84, 16)  & (84, 19)  & (84, 24)  & (84, 25)  & (84, 27)  & (84, 35)  \\ \hline
 (84, 40)  & (87, 8)   & (87, 28)  & (89, 16)  & (89, 30)  & (89, 43)  & (90, 9)   & (90, 10)  & (92, 19)  & (92, 31)  & (93, 3)   \\ \hline
 (95, 9)   & (95, 15)  & (95, 17)  & (95, 36)  & (95, 39)  & (95, 47)  & (96, 18)  & (96, 44)  & (98, 17)  & (98, 21)  & (98,  38) \\ \hline
 (99, 3)   & (101, 3)  & (101, 43) & (101, 47) & (102, 6)  & (102, 9)  & (102, 17) & (102, 22) & (102, 50) & (104, 38) & (104, 42) \\ \hline
 (104, 47) & (105, 12) & (105, 32) & (105, 52) & (107, 11) & (107, 23) & (107, 28) & (107, 41) & (108, 23) & (108, 52) & (111, 25) \\ \hline
 (111, 28) & (111, 40) & (113, 51) & (116, 13) & (116, 14) & (116, 29) & (116, 30) & (117, 23) & (120, 20) & (120, 42) & (122, 21) \\ \hline
 (122, 37) & (122, 49) & (122, 50) & (122, 54) & (123, 11) & (123, 27) & (123, 52) & (125, 19) & (125, 35) & (125, 46) & (126, 33) \\ \hline
 (126, 50) & (128, 16) & (128, 46) & (129, 11) & (129, 12) & (129, 39) & (131, 25) & (132, 18) & (132, 22) & (132, 23) & (132, 25) \\ \hline
 (132, 32) & (132, 35) & (134, 54) & (134, 61) & (135, 4)  & (135, 64) & (137, 20) & (137, 26) & (137, 42) & (138, 57) & (140, 25) \\ \hline
 (140, 66) & (141, 23) & (141, 40) & (141, 70) & (143, 7)  & (144, 9)  & (144, 41) & (144, 59) & (146, 13) & (146, 14) & (146, 49) \\ \hline
 (146, 69) & (147, 64) & (149, 22) & (149, 27) & (149, 55) & (149, 70) & (150, 22) & (150, 46) & (150, 49) & (152, 37) & (152, 45) \\ \hline
 (152, 52) & (155, 11) & (155, 37) & (156, 4)  & (156, 13) & (156, 23) & (156, 38) & (156, 60) & (156, 74) & (158, 13) & (158, 21) \\ \hline
 (158, 26) & (158, 62) & (158, 73) & (161, 10) & (161, 63) & (162, 17) & (162, 53) & (164, 16) & (164, 36) & (164, 46) & (164, 60) \\ \hline
 (164, 66) & (164, 69) & (164, 72) & (164, 81) & (165, 6)  & (165, 14) & (165, 47) & (165, 54) & (165, 71) & (167,  7) & (168, 38) \\ \hline
 (168, 48) & (168, 57) & (168, 72) & (170, 70) & (171, 57) & (171, 60) & (171, 80) & (171, 85) & (173, 31) & (173, 67) & (174, 26) \\ \hline
 (176, 19) & (176, 37) & (176, 53) & (177, 64) & (177, 79) & (179, 20) & (179, 69) & (180, 15) & (180, 16) & (180, 59) & (182, 6)  \\ \hline
 (182, 13) & (182, 57) & (182, 78) & (183, 16) & (183, 47) & (183, 76) & (185, 46) & (186, 78) & (188, 48) & (189, 7)  & (189, 40) \\ \hline
 (189, 59) & (191, 92) & (192, 9)  & (192, 22) & (192, 23) & (192, 35) & (192, 40) & (192, 46) & (192, 56) & (194, 25) & (194, 30) \\ \hline
 (194, 66) & (195, 32) & (195, 51) & (195, 52) & (195, 56) & (197, 7)  & (198, 57) & (200, 11) & (200, 22) & (200, 49) & (200, 55) \\ \hline
 (200, 85) & (200, 96) &		   &		   &		   &		   &		   &		   &		   &		   &		   \\ \hline
\end{tabular}}
\smallskip
\caption{Coordinates of prime numbers of the form $H_{r,k}$  all $3 \le k \le \lceil r/2 \rceil$.} \label{tablaHrkComposite}
\end{table}

\subsection{Density of primes in first 3000 lines of $\mathcal{H}$}  
In this section we study the distribution of prime numbers in the normal Hosoya triangle.
Through extensive experimentation we have found that the triangle has a high density of distinct prime numbers. A finite upper determinant 
Hosoya triangle with exactly $n$ lines is called a \emph{partial triangle} and it is denoted by $\Delta_r$. In Figure \ref{Dis_primes} we use  
dashed lines to show the distribution of the primes in $\Delta_{1000}$.

\begin{figure}[h!]
  \includegraphics[width=10cm]{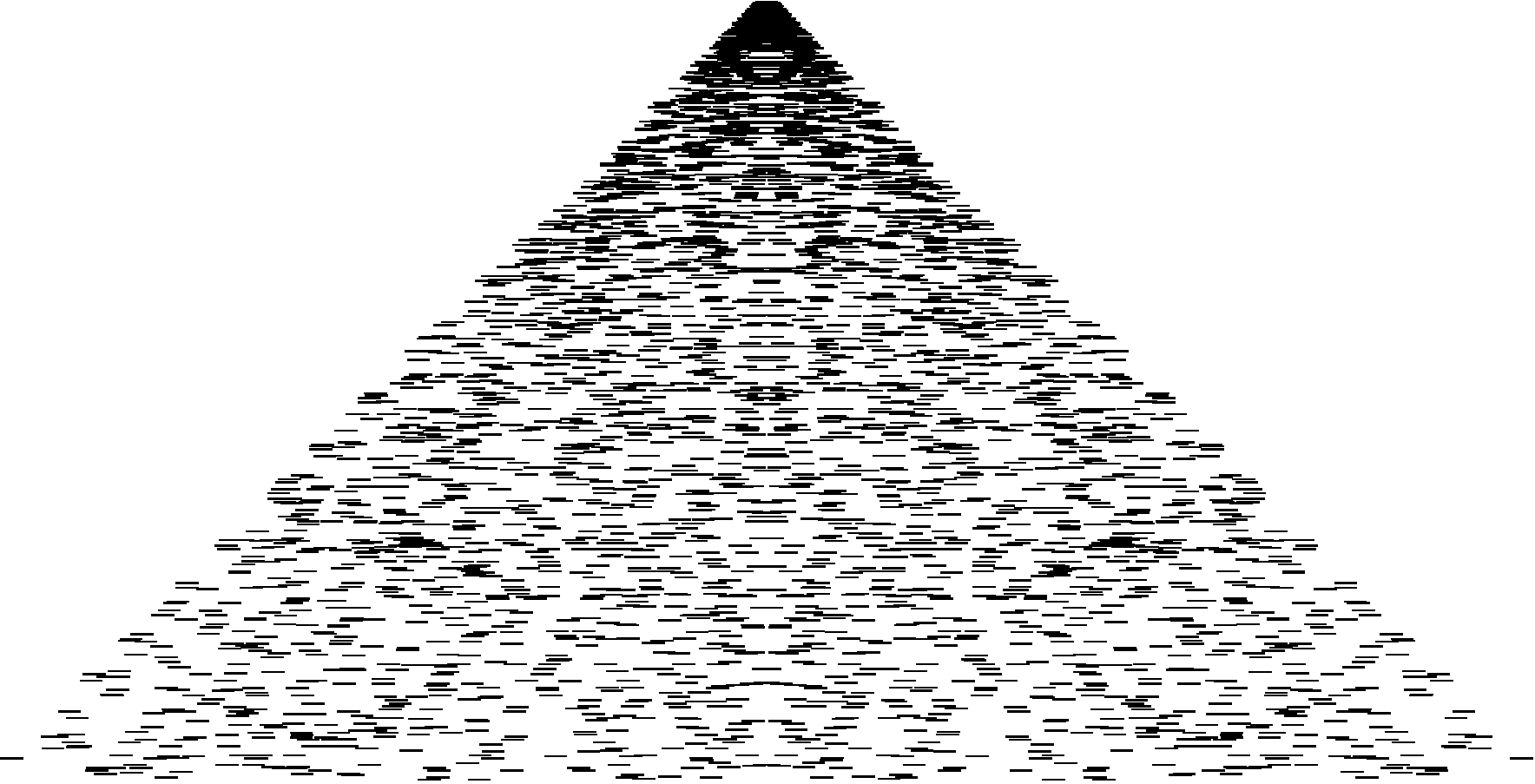}
  \caption{Primes in the first 1000 lines in the triangle}\label{Dis_primes}
\end{figure}

Let $N$ be the total number of distinct entries in the partial Hosoya triangle $\Delta_r$ with $r$ lines. It is easy to see that 
$$N=\left\lfloor \frac{1}{8} (r+1) (r-1)+\frac{1}{8} (r+3) (r+1)\right\rfloor -3=\left\lfloor \left(\frac{r+1}{2}\right)^2\right\rfloor -3.$$ 
Let $\pi_{\Delta_{r}}(N)$ be the number of distinct primes in $r$ lines (or among $N$ entries) of the triangle. In addition, let $\pi(N)$  
represent the number of primes in the set of natural numbers that are less than $N$. The following table shows the number of  
distinct primes in  the normal Hosoya triangle  $\Delta_r$ with $r$ lines.

\begin{table} [!h] \small
	\centering
	\addtolength{\tabcolsep}{-3pt} \scalebox{1}{
\begin{tabular}{|l||l|l|l|l|l|l|l|l|l|}
  \hline
   $n$                                        & 10\; & 20\;  & 30\;   & 50\;   &100\;   & 500\;     & 1000\;     & 2000\;      &3000\;\\ \hline \hline
   $N$                                       & 27\; & 107\;& 237\;& 647\;  &2547\; & 62747\; &250497\;  & 1000997\; & 2251497\; \\  \hline
   $\large{\pi}_{\Delta_{n}}(N)$ & 11   & 35   & 54     & 100     & 194    & 929        & 1876       & 3876        & 5844\\ \hline
   $\pi(N)$                                 & 9    & 28   & 51     & 118     & 372     &  6297    & 22076     & 78572       & 166169\\   \hline
\end{tabular}}
\smallskip
\caption{Distribution of Primes in normal Hosoya triangle.} \label{tabla3}
\end{table}

\section{Open questions}

The aim of this paper was to give a motivation to work a series of open questions about the primes and composite numbers within the Hosoya determinant triangle. Here we summarize three among others questions.  

\begin{enumerate} 
\item Are there infinitely many primes of either forms  $F_{k}+L_{k+1}$ or $F_{k}+L_{k-1}$?

\item Are there infinitely many primes of the form  $H_{r,k}=F_{k-1}F_{r-k+2}+F_{k}F_{r-k}$?

\item Let $R_{n}$ be the $n$-th row of the triangle. Are there infinitely many rows $R_{n}$ free of primes, with $n \not \equiv 1 \bmod 3$?   

\end{enumerate}

\section{Acknowledgement}
	
The second and fourth authors were partially supported by The Citadel Foundation.

\medskip

\noindent MSC2020: 11B39; Secondary 11B83, 11A41.

\section{Appendix; Data}

The pair $(r,k)$ in the following tables gives the coordinates of the points $H_{r,k}$ that are prime numbers. For example, the first entry of the first table  
$(5,3)$ this means that the point $H_{5,3}$ is a prime number (in this case $H_{5,3}=5$). The reader can evaluate  $H_{r,k}$, using any of the formulas  
given in equations  \eqref{eq:maineq}, \eqref{definitionHrk1}, or \eqref{definitionHrk2}.

\begin{table} [!h] \small
\centering
\addtolength{\tabcolsep}{-3pt} \scalebox{1}{
}
\smallskip
\caption{Coordinates for  prime numbers of the form $H_{r,k}$  for all $3 \le k \le \lceil r/2 \rceil$.} \label{tablaHrkComposite4}

\end{table}

\end{document}